\newtheorem{theorem}{Theorem}
\newtheorem{proposition}{Proposition}
\newtheorem{lemma}[theorem]{Lemma}
\theoremstyle{definition}
\theoremstyle{definition}
\newtheorem{remark}{Remark}
\theoremstyle{definition}
\newtheorem{example}{Example}
\numberwithin{equation}{section}
\numberwithin{table}{section}
\numberwithin{figure}{section}
\newcommand{\Z}{\mathbb{Z}} 
\newcommand{\R}{\mathbb{R}}
\newcommand{\N}{\mathbb{N}}
\newcommand{\lpk}{l^{p_k}(\Z)} 
\newcommand{\e}{\varepsilon} 
\newcommand{\abs}[1]{\left|#1\right|}
\newcommand{\norm}[1]{\left\|#1\right\|}
\DeclareMathOperator{\argmin}{argmin}
\title{Sequences of positive homoclinic solutions to difference equations with variable exponent}
\author{Robert Stegli\'{n}ski and Magdalena Nockowska-Rosiak\footnote{Corresponding author}\\ \\
Institute of Mathematics, Lodz University of Technology,\\
ul. W\a'olcza\a'nska 215, 90-924, \L\a'od\a'z, Poland\\
emails: robert.steglinski@p.lodz.pl, magdalena.nockowska@p.lodz.pl
\date{} 
}
\begin{document}\maketitle

{\bf Abstract.} We study the existence of infinitely many positive homoclinic solutions to a second-order difference equation on integers with $p_k$-Laplacian. 
To achieve our goal we use the critical point theory and the general variational principle of Ricceri.


\noindent {\bf{Keywords:}} Difference equations; $p_k-$Laplacian; variational methods; infinitely many solutions.


\section{Introduction}

In this paper we consider the nonlinear second-order difference problem of the form
\begin{equation}\label{eq}
\begin{cases}
-\nabla^- \left( a_k\abs{\nabla^+ u_k}^{p_k-2}\nabla^+ u_k\right) +b_k\abs{u_k}^{p_k-2}u_k=f_k(u_k),& k\in\Z 
\\
u_k\rightarrow 0   &\textrm{as} \ |k|\to \infty
\end{cases}.
\end{equation}
In the whole paper we assume $\{p_k\}_{k\in\Z}$ with $1<p^-\le p^+<\infty$, $p^-:=\inf_{k\in\Z}p_k$, $p^+:=\sup_{k\in\Z}p_k$, 
$a,b:\Z\to (0,+\infty)$, and $f_k:\R\to\R$ are continuous functions for $k\in\Z$. Moreover, we use the notation $\nabla^- u_k=u_k-u_{k-1}$, $\nabla^+ u_k=u_{k+1}-u_{k}$, $k\in\Z$. We say that a solution $u=\{u_k\}$ to equation given in \eqref{eq} is homoclinic if $\lim_{\abs{k} \to \infty }u_k=0.$

Difference equations represent the discrete counterpart of ordinary differential equations and are usually studied in connection with numerical analysis. Equation given in \eqref{eq} is the discrete counterpart of the following
\begin{equation}\label{eq-c}
-\left(a(x)\abs{u'}^{p(x)-2}u'\right)'+b(x)\abs{u}^{p(x)-2}u=f(x,u).
\end{equation}
The above equation is interesting because of its wide applications for example in nonlinear elasticity or fluid dynamics. Moreover, equation \eqref{eq-c} becomes the stationary nonlinear Schr\"odinger equation for $p(x)\equiv2$ and $a(x)\equiv1$. As its continuous counterpart, equation given in \eqref{eq} for $p_k=2$ has many applications in various areas of physics.

The variational method is very powerful tool in study of boundary value problems to difference equations. Many authors have applied different results of the critical point theory to prove existence and multiplicity results for solutions to
discrete nonlinear problems. Studying such problems on bounded discrete
intervals allows to search for solutions in a finite-dimensional Banach
space (see \cite{CIT, CM, BR1, RR}). To study such problems on unbounded intervals directly by
variational methods, \cite{IT} and \cite{MG} introduced coercive weight
functions which allow to preserve of certain compactness properties on $
l^{p}$-type spaces. That method was used in the following papers \cite{IR,K,St2016ADE,St2016,St2017, St2018, SM}.

\bigskip

The goal of the present paper is to establish sufficient conditions for the existence of a sequence of positive solutions to problem (\ref{eq}) which norm tend to infinity or zero. Infinitely many solutions for a constant $\{p_k\}$ were obtained in \cite{SM}  by employing Nehari manifold methods, in \cite{K, St2018} by applying a variant of the fountain theorem, in \cite{St2016ADE,St2016,St2017} by use of the Ricceri's theorem (see \cite{BMB, R}) and in \cite{St2016, St2017} by applying a direct argumentation. The existence of a nontrivial or infinitely many solutions to problem \eqref{eq} with variable exponent were proved in \cite{AP, ChTA, KA} by the mountain pass theorem. In \cite{AP, ChTA} authors assumed some symmetry conditions on nonlinear part of the problem to get infinitely many solutions to it. Moreover, in \cite{AP} there were considered different sequences of variable exponent on the left side of equation given in \eqref{eq}. In this paper, similarly to \cite{St2016ADE,St2016, St2017}, the nonlinearity $f$ has a suitable behavior at infinity or zero, without any symmetry conditions.

We will study problem \eqref{eq} under the following assumptions:
\begin{itemize}
	
	\item[$(A)$] $a_k>0$, $b_k>0$ for all $k\in\Z $, $b_k\rightarrow +\infty $ as $\abs{k}\to+\infty$;
	
	\item[$(F_1)$] $f_k(0)=0$ for all $k\in\Z$;
	
	\item[$(F_2)$] $\displaystyle\sum_{k\in\Z}\max_{\abs{t}\le T}\abs{f_{k}(t)}<\infty$ for any $T>0$; 

	
	\item[$(F_3^0)$] there exist sequences $\{c_{n}\},\{d_{n}\}$ such that $0<d_{n+1}<c_{n}<d_{n},$ $\lim_{n\rightarrow \infty }d_{n}=0$ and $f_k(t)\leq 0$ for every $k\in\Z$ and $t\in [ c_{n},d_{n}],n\in \N$;
	
	\item[$(F_3^\infty)$] there exist sequences $\{c_{n}\},\{d_{n}\}$ such that $0<c_{n}<d_{n}<c_{n+1},$ $\lim_{n\rightarrow \infty }c_{n}=+\infty$ and $f_k(t)\leq 0$ for every $k\in\Z$ and $t\in [ c_{n},d_{n}],n\in \N$;

	\item[$(F_4^0)$] $\displaystyle\liminf_{t\to0^+}\frac{\sum_{k\in\Z}\max_{\abs{\xi}\le t}F_k(\xi)}{t^{p^+}}<\frac{1}{p^+\alpha^{p^+}}$;
	
	\item[$(F_4^\infty)$] $\displaystyle\liminf\limits_{t\to+\infty}\frac{\sum_{k\in\Z}\max_{\abs{\xi}\le t}F_k(\xi)}{t^{p^-}}<\frac{1}{p^+\alpha^{p^-}}$;

	\item[$(F_5^0)$] 
		$\displaystyle\max\left\{\limsup\limits_{(k,t)\rightarrow (+\infty
		,0^+)}\frac{F_k(t)}{\left( a_{k-1}+a_k+b_k\right) t^{p^-}},\limsup\limits_{(k,t)\rightarrow (-\infty,0^+)}\frac{F_k(t)}{\left( a_{k-1}+a_k+b_k\right) t^{p^-}}\right\}>\tfrac{1}{p^-}$;
		
	\item[$(F_5^{\infty})$] 
	$\displaystyle\max\left\{\limsup\limits_{(k,t)\rightarrow (+\infty
		,+\infty)}\frac{F_k(t)}{\left( a_{k-1}+a_k+b_k\right) t^{p^+}},
	\limsup\limits_{(k,t)\rightarrow (-\infty,+\infty)}\frac{F_k(t)}{\left( a_{k-1}+a_k+b_k\right) t^{p^+}}\right\}>\tfrac{1}{p^-}$;

	\item[$(F_6^0)$] $\displaystyle\sup_{k\in \Z}\left( \limsup\limits_{t\to0^+ }\frac{F_k(t)}{\left(
	a_{k-1}+a_k+b_k\right) t^{p^-}}\right)>\tfrac{1}{p^-}$;

	\item[$(F_6^\infty)$] $\displaystyle\sup_{k\in \Z}\left( \limsup\limits_{t\to +\infty }\frac{F_k(t)}{\left(
	a_{k-1}+a_{k}+b_k\right) t^{p^+}}\right)>\tfrac{1}{p^-}$, 
\end{itemize}
where $F_k$ is the primitive function of $f_k$, that is $F_k(t)=\int_{0}^{t}f_k(s) ds$ for $t\in \R$, $k\in \Z$, and
\begin{equation}\label{alfa}
\alpha:=\sup_{k\in\Z}b^{-\tfrac{1}{p_k}}_k.
\end{equation}
Notice that $\alpha<\infty$, because $b_k\to\infty$ and $\{p_k\}$ is bounded.

Equation given in \eqref{eq} is considered in certain reflexive Banach space $E$ defined in the next Section. This equation is the Euler-Lagrange equation of certain action functional which is well defined on space $E$. Elements of this space automatically satisfy boundary condition in  problem \eqref{eq}. In this paper we will prove the following theorems:
\begin{theorem}\label{tw1} 
Assume that $(A)$, $(F_1)$, $(F_{2})$ and $(F_3^\infty)$ are satisfied. Moreover, assume that one of conditions $(F_5^\infty)$ or $(F_6^\infty)$ is satisfied. Then, problem \eqref{eq} possesses a sequence of positive solutions in $E$ whose norms tend to infinity.
\end{theorem}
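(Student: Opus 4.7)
The plan is to cast \eqref{eq} variationally on the reflexive Banach space $E$ (to be introduced in the next section) and then invoke the Ricceri--Bonanno--Molica Bisci variational principle (\cite{BMB,R}, as used in \cite{St2016ADE,St2016,St2017}) applied to a truncated energy whose non-trivial critical points are automatically positive.

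First I would introduce the functionals
\[
\Phi(u):=\sum_{k\in\Z}\frac{a_k}{p_k}\abs{\nabla^+u_k}^{p_k}+\sum_{k\in\Z}\frac{b_k}{p_k}\abs{u_k}^{p_k},\qquad
\Psi(u):=\sum_{k\in\Z}F_k(u_k),
\]
on $E$. Assumption $(A)$ makes $\Phi$ coercive, $C^1$ and sequentially weakly lower semicontinuous, while $(F_2)$ together with the compactness of the embedding $E\hookrightarrow l^{\infty}(\Z)$ (implied by $b_k\to+\infty$ and quantified by the constant $\alpha$ of \eqref{alfa}) makes $\Psi$ of class $C^1$ and sequentially weakly upper semicontinuous, so that critical points of $I:=\Phi-\Psi$ are precisely the homoclinic solutions of \eqref{eq}. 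To obtain positive solutions I would replace $f_k$ by the one-sided truncation $\tilde f_k(t):=f_k(t)$ for $t\ge 0$, $\tilde f_k(t):=0$ for $t<0$, with corresponding functionals $\tilde\Psi,\tilde I$; testing $\tilde I'(u)=0$ against $u^-:=\min\{u,0\}$ and verifying case by case that each product $a_k\abs{\nabla^+u_k}^{p_k-2}\nabla^+u_k\cdot\nabla^+u_k^-$ is non-negative collapses the identity to $\sum_k b_k\abs{u_k^-}^{p_k}=0$, hence $u\ge 0$ componentwise; strict positivity of any non-trivial critical point is then derived from the equation using $(F_1)$.

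Next I would apply the principle at $\lambda=1$: it suffices to verify (i) $\gamma:=\liminf_{r\to+\infty}\varphi(r)<1$, where
\[
\varphi(r)=\inf_{\Phi(u)<r}\frac{\sup_{\Phi(v)<r}\tilde\Psi(v)-\tilde\Psi(u)}{r-\Phi(u)},
\]
and (ii) $\tilde I$ is unbounded below. For (i), pick $r_n\to+\infty$ matched to $d_n$ from $(F_3^\infty)$ so that $\Phi(v)<r_n$ implies $\abs{v_k}\le d_n$ for every $k\in\Z$ via \eqref{alfa}; since $f_k\le 0$ on $[c_n,d_n]$, the maximum of $\tilde F_k$ on $[0,d_n]$ is attained already in $[0,c_n]$, so
\[
\sup_{\Phi(v)<r_n}\tilde\Psi(v)\le\sum_{k\in\Z}\max_{\abs{\xi}\le c_n}\tilde F_k(\xi),
\]
which is finite by $(F_2)$; evaluating $\varphi(r_n)$ at $u=0$ then yields a bound that tends to $0$, so $\gamma=0$. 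For (ii) I construct one-site test functions $w_n\in E$ supported at a suitable index $k_n$ with value $t_n\to+\infty$: under $(F_6^\infty)$ the index is a fixed $k_0$ and the inner $\limsup$ furnishes the sequence $t_n$; under $(F_5^\infty)$ both $k_n\to\pm\infty$ and $t_n\to+\infty$ are dictated by the $\limsup$. Using $\Phi(w_n)\le\frac{1}{p^-}(a_{k_n-1}+a_{k_n}+b_{k_n})t_n^{p^+}$ for $t_n\ge 1$ together with the lower bound $F_{k_n}(t_n)>\left(\frac{1}{p^-}+\varepsilon\right)(a_{k_n-1}+a_{k_n}+b_{k_n})t_n^{p^+}$ coming from the hypothesis gives $\tilde I(w_n)\le-\varepsilon(a_{k_n-1}+a_{k_n}+b_{k_n})t_n^{p^+}\to-\infty$.

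The principle then produces a sequence of critical points $\{u_n\}$ of $\tilde I$ with $\Phi(u_n)\to+\infty$, whence $\norm{u_n}\to+\infty$, and the positivity step upgrades these to the desired positive solutions of \eqref{eq}. I expect the principal obstacle to lie in step~(i): translating the modular bound $\Phi(v)<r_n$ into a uniform pointwise bound $\abs{v_k}\le d_n$ in the variable-exponent setting requires a careful interplay between $\alpha$, $p^-$ and $p^+$, and the threshold $r_n$ must be matched to $d_n$ so that the potential growth of $\tilde F_k$ on the ``free'' intervals $(d_{n-1},c_n)$ does not destroy the bound on $\sup_{\Phi(v)<r_n}\tilde\Psi(v)$.
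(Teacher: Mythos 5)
Your functional setup, the positivity argument via testing against $u^-$, and the one-site test sequences showing that $J$ is unbounded below under $(F_5^\infty)$ or $(F_6^\infty)$ all match the paper. The problem is the core of your plan: the verification that $\gamma=\liminf_{r\to+\infty}\varphi(r)<1$, which you need in order to invoke Theorem \ref{cpt}(ii) at $\lambda=1$. Taking $u=0$ gives
\[
\varphi(r_n)\le\frac{\sup_{\Phi(v)<r_n}\tilde\Psi(v)}{r_n}\le\frac{\sum_{k\in\Z}\max_{0\le\xi\le c_n}F_k(\xi)}{r_n},
\]
and the requirement that $\Phi(v)<r_n$ force $\norm{v}_\infty\le d_n$ caps $r_n$ at roughly $\tfrac{1}{p^+}\bigl(d_n/\alpha\bigr)^{p^-}$ by Proposition \ref{osz-Phi}(iii), so your bound is essentially $p^+\alpha^{p^-}\sum_{k\in\Z}\max_{\abs{\xi}\le c_n}F_k(\xi)/d_n^{p^-}$. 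Nothing in the hypotheses of Theorem \ref{tw1} controls this ratio: $(F_3^\infty)$ is only a sign condition on $[c_n,d_n]$ and says nothing about how fast $F_k$ may grow on the complementary ``free'' intervals, while $(F_2)$ guarantees finiteness of the numerator for each fixed $n$ but not its growth in $n$. The missing control is exactly what $(F_4^\infty)$ supplies, and that is the hypothesis of the \emph{separate} Theorem \ref{tw1'}, not of Theorem \ref{tw1}; Remark \ref{rem-2} and the closing remark exhibit nonlinearities satisfying the oscillation condition but violating $(F_4)$, so your claim that the bound ``tends to $0$, so $\gamma=0$'' is false in general. You flagged this as the principal obstacle yourself, but it is not a technicality to be smoothed over --- it is the reason the paper does not use Ricceri's principle for this theorem.

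The paper instead argues directly, in the spirit of Faraci--Krist\'aly: for each $n$ it minimizes $J$ over the weakly closed set $W_n=\{u\in E:\abs{u_k}\le d_n\ \text{for all}\ k\in\Z\}$ (boundedness from below and attainment follow from $(F_2)$, reflexivity and weak lower semicontinuity); the sign condition $(F_3^\infty)$ together with the truncation $\gamma(s)=\min\{s^+,c_n\}$ shows that the minimizer satisfies $0\le u^n_k\le c_n$, hence lies $l^\infty$-strictly inside $W_n$ and is a local minimum --- therefore a critical point --- of $J$ on all of $E$; finally the one-site sequences you constructed show $\inf_{W_n}J\to-\infty$, which forces $\norm{u^n}_E\to\infty$ by weak lower semicontinuity. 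To repair your write-up you should replace step (i) and the appeal to Theorem \ref{cpt} by this direct minimization scheme; your steps for positivity and for unboundedness below can be kept essentially verbatim.
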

\begin{theorem}\label{tw2}
Assume that $(A)$, $(F_1)$, $(F_{2})$ and $(F_3^0)$ are satisfied. Moreover, assume that one of conditions $(F_5^0)$ or $(F_6^0)$ is satisfied. Then, problem \eqref{eq} has a sequence of positive solutions in $E$ whose norms tend to 0.
\end{theorem}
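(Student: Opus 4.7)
The plan is to carry out the "at zero" analogue of Theorem \ref{tw1}, applying Ricceri's general variational principle in the version tailored to local minima converging to a given point (the same abstract tool used in \cite{St2016ADE, St2016, St2017}). Let $I = \Phi - \Psi$ on the Banach space $E$ constructed in Section 2, where
$$\Phi(u) = \sum_{k\in\Z}\tfrac{1}{p_k}\bigl(a_k|\nabla^+ u_k|^{p_k}+b_k|u_k|^{p_k}\bigr), \qquad \Psi(u) = \sum_{k\in\Z}F_k(u_k).$$
The argument relies on the framework (assumed established earlier) that $\Phi$ is coercive, sequentially weakly lower semicontinuous and $C^1$, that $\Psi$ is $C^1$ with sequentially weakly continuous derivative, that $\Phi(0)=\Psi(0)=0$, and that critical points of $I$ in $E$ are precisely the solutions of \eqref{eq}.

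First I would reduce to positive solutions via a truncation argument built on $(F_3^0)$. Replace $f_k$ by $\tilde f_k$ which agrees with $f_k$ on $[0,c_1]$, vanishes on $(-\infty,0]\cup[d_1,+\infty)$, and is continuously interpolated on $[c_1,d_1]$ preserving the sign condition $\tilde f_k\le 0$ there. Any critical point $u$ of the truncated $\tilde I$ satisfies $u_k\ge 0$ (test against $u^-$) and $u_k\le c_1$: if $\sup_k u_k\in[c_n,d_n]$ for some $n$, testing the Euler--Lagrange equation against $(u-c_n)^+$ combined with $\tilde f_k\le 0$ on $[c_n,d_n]$ forces $\sup_k u_k=c_n$, and iterating drives the supremum into $[0,c_1]$. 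Thus every critical point of $\tilde I$ is in fact a positive solution of \eqref{eq}.

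Next I verify Ricceri's two abstract hypotheses. For the small-quantity side, set $r_n := d_n^{p^+}/(p^+\alpha^{p^+})$; the embedding $\|u\|_\infty\le\alpha\,\Phi(u)^{1/p^+}$, immediate from \eqref{alfa} applied index-wise for $\Phi(u)$ small, shows $\Phi(u)<r_n\Rightarrow\|u\|_\infty<d_n$. Using $(F_2)$ to interchange sum and max, and the equality $\max_{|\xi|\le d_n}F_k(\xi)=\max_{|\xi|\le c_n}F_k(\xi)$ forced by $(F_3^0)$, one obtains
$$\liminf_{r\to 0^+}\frac{\sup\{\Psi(u):\Phi(u)<r\}}{r}<\frac{1}{p^+},$$
which plays the role of $(F_4^0)$ and is in effect produced automatically by $(F_2)+(F_3^0)$. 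For the opposing geometric condition, take a point-mass test function $v^{(n)}=t_n e_{k_n}$ supported at a single site, with $(k_n,t_n)$ chosen from $(F_5^0)$ (letting $|k_n|\to\infty$, $t_n\to 0^+$) or from $(F_6^0)$ (fixed $k_n$, $t_n\to 0^+$). A direct computation bounds $\Phi(v^{(n)})\le\tfrac{1}{p^-}(a_{k_n-1}+a_{k_n}+b_{k_n})t_n^{p_\ast}$ with $p_\ast\in\{p_{k_n-1},p_{k_n}\}$, and comparison with the lower bound $F_{k_n}(t_n)>\tfrac{1}{p^-}(a_{k_n-1}+a_{k_n}+b_{k_n})t_n^{p^-}$ from $(F_5^0)$ or $(F_6^0)$ gives $\Psi(v^{(n)})>\Phi(v^{(n)})$ for large $n$, placing the sub-level infimum of $\Phi-\Psi$ strictly below $0$ at arbitrarily small scales.

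Ricceri's theorem then delivers a sequence $u^{(n)}$ of distinct local minima of $\tilde I$ with $\Phi(u^{(n)})\to 0$; coercivity of $\Phi$ gives $\|u^{(n)}\|\to 0$, and the truncation step upgrades each $u^{(n)}$ to a positive solution of \eqref{eq}. I expect the main technical obstacle to be the positivity/upper-bound step: one must arrange the truncation carefully so that $\tilde I$'s critical points remain inside $[0,c_1]$ componentwise, and verify that the Ricceri minima found at level $r_n$ indeed fall into this well rather than escaping through some intermediate $[c_m,d_m]$. A secondary subtlety is the variable-exponent book-keeping in the point-mass computation, since $p_{k_n}$ need not equal $p^-$; this is precisely why $(F_4^0)$ appears with $t^{p^+}$ while $(F_5^0),(F_6^0)$ appear with $t^{p^-}$, so that the two halves of Ricceri's inequality combine cleanly.
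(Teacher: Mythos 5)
Your proposal routes the proof through Ricceri's general variational principle, but that is the method of Theorem \ref{tw2'}, not of Theorem \ref{tw2}, and the step where you claim that the needed smallness condition
$\liminf_{r\to 0^{+}}\sup\{\Psi(u):\Phi(u)<r\}/r<1/p^{+}$
``is in effect produced automatically by $(F_2)+(F_3^0)$'' is the gap. With $r_n=\tfrac{1}{p^+}(d_n/\alpha)^{p^+}$ the best estimate available from your argument is
$\varphi(r_n)\le p^{+}\alpha^{p^{+}}\,\sum_{k\in\Z}\max_{\abs{\xi}\le c_n}F^+_k(\xi)\,/\,d_n^{p^{+}}$,
and $(F_3^0)$ is a purely qualitative sign condition: it tells you nothing about the rate at which $\sum_{k}\max_{\abs{\xi}\le c_n}F_k(\xi)$ decays relative to $d_n^{p^{+}}$ (note $c_n$ may be arbitrarily close to $d_n$, so even the crude bound $c_n\sum_k\max\abs{f_k}$ divided by $d_n^{p^{+}}$ blows up like $d_n^{1-p^{+}}$). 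This is not a repairable technicality: the paper's Remark \ref{rem-2} exhibits nonlinearities satisfying all hypotheses of Theorem \ref{tw2} for which
$\liminf_{t\to 0^{+}}\sum_{k}\max_{\abs{\xi}\le t}F_k(\xi)/t^{p^{+}}\ge 2/(\alpha^{p^{+}}p^{+})$,
i.e.\ $(F_4^0)$ fails and the Ricceri hypothesis cannot be verified; Theorems \ref{tw2} and \ref{tw2'} are genuinely independent. So your scheme proves (essentially) Theorem \ref{tw2'}, not Theorem \ref{tw2}.

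The paper instead uses the direct constrained-minimization method of Theorem \ref{tw1}, transplanted to zero: minimize $J=\Phi-\Psi$ (with $\Psi$ built from $f_k^{+}$, which handles positivity without your ad hoc truncation of $f$) over $W_n=\{u\in E:\norm{u}_{\infty}\le d_n\}$; weak lower semicontinuity and coercivity give a minimizer $u^n$, and a pointwise truncation of $u^n$ at level $c_n$ together with the sign condition $(F_3^0)$ shows $0\le u^n_k\le c_n$, so $u^n$ is an \emph{interior} local minimum of $J$ in $E$, hence a critical point --- no quantitative sublevel estimate is ever needed. Conditions $(F_5^0)$ or $(F_6^0)$ supply one-point test sequences in $W_n$ with negative energy, so $J(u^n)<0$ and $u^n\neq 0$; finally $\tfrac{1}{p^+}\rho_E(u^n)\le\Phi(u^n)=J(u^n)+\Psi(u^n)<c_n\sum_{k\in\Z}\max_{\abs{t}\le d_1}\abs{f_k(t)}\to 0$ yields $\norm{u^n}_E\to 0$. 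Your computation with the point-mass test functions and the $(F_5^0)/(F_6^0)$ comparison is sound and reappears in this argument, but the abstract machinery you hang it on does not apply under the stated hypotheses.
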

\begin{theorem}\label{tw2'} 
Assume that $(A)$, $(F_{1})$, $(F_2)$ and $(F_{4}^0)$ are satisfied and assume that one of the conditions $(F_5^0)$ or $(F_6^0)$ holds. Then, problem \eqref{eq} possesses a sequence of positive solutions in $E$ whose norms tend to zero.
\end{theorem}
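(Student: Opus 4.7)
\textbf{Proof plan for Theorem \ref{tw2'}.} The plan is to apply the Ricceri-type general variational principle (see \cite{BMB, R}, as used in \cite{St2016ADE,St2016,St2017}) to the energy functional associated with \eqref{eq} on the reflexive Banach space $E$ to be introduced in the next section. To guarantee positivity of the produced solutions I first replace $f_k$ by its positive-part truncation $\widetilde{f}_k(t)=f_k(t)$ for $t>0$ and $\widetilde{f}_k(t)=0$ for $t\leq 0$, and denote its primitive by $\widetilde{F}_k$. The energy functional of the modified problem is $I_\lambda=\Phi-\lambda\Psi$ with
$$\Phi(u)=\sum_{k\in\Z}\left(\frac{a_{k-1}}{p_k}\abs{\nabla^+u_{k-1}}^{p_k}+\frac{b_k}{p_k}\abs{u_k}^{p_k}\right),\qquad \Psi(u)=\sum_{k\in\Z}\widetilde{F}_k(u_k).$$
Testing $I_\lambda'(u)=0$ against $u^-:=\min(u,0)$ and using $(F_1)$ yields $u\geq 0$ for every critical point; if moreover $u_{k_0}=0$ for some $k_0\in\Z$, evaluating the equation at $k_0$ forces $a_{k_0-1}u_{k_0-1}^{p_{k_0-1}-1}+a_{k_0}u_{k_0+1}^{p_{k_0}-1}=0$, so $u_{k_0\pm 1}=0$, and iterating shows $u\equiv 0$. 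Hence every nontrivial critical point of the truncated $I_\lambda$ is a strictly positive solution of \eqref{eq}.

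I next verify the Ricceri hypotheses on $E$: $\Phi$ is coercive, continuously G\^ateaux differentiable and sequentially weakly lower semicontinuous, while $\Psi$ is sequentially weakly continuous with compact derivative, thanks to $(F_2)$ and the compact embedding $E\hookrightarrow l^\infty(\Z)$ ensured by $(A)$. Setting
$$\varphi(r)=\inf_{\Phi(u)<r}\frac{\sup_{\Phi(v)<r}\Psi(v)-\Psi(u)}{r-\Phi(u)},\qquad \gamma=\liminf_{r\to 0^+}\varphi(r),$$
Ricceri's dichotomy asserts that if $\gamma<+\infty$ then for every $\lambda\in(0,1/\gamma)$ either $I_\lambda$ admits a sequence of pairwise distinct critical points converging to $0$ in $E$, or $0$ is a local minimum of $I_\lambda$.

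The condition $(F_4^0)$ is used to bound $\gamma$. If $\Phi(u)<r$ with $r$ small, then each term in the defining sum is at most $r$, whence $b_k\abs{u_k}^{p_k}\leq p^+r$ and therefore $\abs{u_k}\leq b_k^{-1/p_k}(p^+r)^{1/p_k}\leq\alpha(p^+r)^{1/p^+}$. Taking $u=0$ in the infimum gives
$$\varphi(r)\leq \frac{1}{r}\sum_{k\in\Z}\max_{\abs{\xi}\leq \alpha(p^+r)^{1/p^+}}F_k(\xi),$$
and the substitution $t=\alpha(p^+r)^{1/p^+}$ combined with $(F_4^0)$ rewrites this as $\gamma<1$; in particular $\lambda=1$ is an admissible choice.

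The crucial and most delicate step is to rule out the alternative that $0$ is a local minimum of $I_1$, and this is where $(F_5^0)$ or $(F_6^0)$ enters. Under $(F_6^0)$ I pick $k_0\in\Z$ and a sequence $t_n\to 0^+$ with $F_{k_0}(t_n)>(1/p^-)(a_{k_0-1}+a_{k_0}+b_{k_0})t_n^{p^-}$; letting $v_n\in E$ be supported only at $k_0$ with value $t_n$, one has $t_n^{p_{k_0}},t_n^{p_{k_0+1}}\leq t_n^{p^-}$ for large $n$, so a direct computation gives
$$\Phi(v_n)\leq \tfrac{1}{p^-}(a_{k_0-1}+a_{k_0}+b_{k_0})t_n^{p^-}<F_{k_0}(t_n)=\Psi(v_n),$$
whence $I_1(v_n)<0=I_1(0)$ with $v_n\to 0$ in $E$. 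Under $(F_5^0)$ the same construction works with a pivot index $k_0=k_n$ diverging to $\pm\infty$ along a subsequence realising the $\limsup$. The delicate point throughout is matching the variable-exponent powers $t^{p^\pm}$ with the embedding constant $\alpha$: the hypotheses $(F_4^0)$ and $(F_5^0)/(F_6^0)$ have been calibrated precisely so that these powers align with the thresholds in Ricceri's dichotomy, and this calibration is where the bulk of the bookkeeping lies.
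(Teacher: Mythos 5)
Your proposal follows essentially the same route as the paper's proof: truncate $f_k$ to $f_k(t^+)$ so that nontrivial critical points are positive solutions, apply the Bonanno--Molica Bisci/Ricceri principle to $J=\Phi-\Psi$ with $\lambda=1$, use $(F_4^0)$ together with the estimate $\norm{v}_{\infty}\le\alpha\left(p^+\Phi(v)\right)^{1/p^+}$ to show that the relevant $\liminf_{r\to 0^+}\varphi(r)$ is strictly less than $1$, and use $(F_5^0)$ or $(F_6^0)$ with one-point test sequences to exclude the alternative that $0$ is a local minimum of $J$; the conclusion $\Phi(u^n)\to\inf_E\Phi=0$ then yields $\norm{u^n}_E\to 0$. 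Two points deserve comment. First, your $\Phi$ has mismatched indices: the term carrying the coefficient $a_{k-1}$ and the increment $\nabla^+u_{k-1}$ should carry the exponent $p_{k-1}$, not $p_k$; otherwise $\Phi'$ does not reproduce the operator in \eqref{eq}. Second, and more substantively, in the $(F_5^0)$ branch the assertion that ``the same construction works'' glosses over the only genuinely delicate step of that case: since $k_n\to\pm\infty$ and $b_{k_n}\to\infty$ by $(A)$, the convergence $t_n\to 0^+$ does \emph{not} by itself give $\norm{v^n}_E\to 0$. One must observe that
\begin{equation*}
\rho_E(v^n)\le\left(a_{k_n-1}+a_{k_n}+b_{k_n}\right)t_n^{p^-}<p^-F_{k_n}(t_n)
\end{equation*}
by the very choice of $(k_n,t_n)$, and then prove $F_{k_n}(t_n)\to 0$ — for instance from $(F_2)$ via $F_{k_n}(t_n)\le t_n\sum_{k\in\Z}\max_{\abs{s}\le T}\abs{f_k(s)}$, or, as the paper does, from the finiteness of the $\liminf$ in $(F_4^0)$, which forces $\max_{\abs{\xi}\le c_m}F_k(\xi)\to 0$ uniformly in $k$. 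With that detail supplied, your argument is complete and matches the paper's.
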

\begin{theorem}\label{tw1'} 
Assume that $(A)$, $(F_{1})$, $(F_2)$ and $(F_{4}^\infty)$ are satisfied and
assume that one of the conditions $(F_5^\infty)$ or $(F_6^\infty)$ holds. Then, problem \eqref{eq} has a sequence of positive solutions in $E$ whose norms tend to infinity.
\end{theorem}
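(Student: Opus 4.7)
The plan is to apply Ricceri's variational principle in the form of Bonanno--Molica Bisci (cf.\ \cite{BMB, R}) to the action functional $I := \Phi - \Psi$ naturally associated with \eqref{eq}, where
\begin{equation*}
\Phi(u) := \sum_{k\in\Z}\frac{a_k}{p_k}\abs{\nabla^+ u_k}^{p_k} + \sum_{k\in\Z}\frac{b_k}{p_k}\abs{u_k}^{p_k}, \qquad \Psi(u) := \sum_{k\in\Z} F_k(u_k),
\end{equation*}
on the reflexive Banach space $E$ introduced in the next section. As a preliminary reduction, replace $f_k$ by $\tilde f_k(t) := f_k(t)$ for $t\ge 0$ and $\tilde f_k(t) := 0$ for $t<0$; this truncation is continuous by $(F_1)$, and testing the Euler--Lagrange identity against the negative part $u^-$ of a critical point forces $u^-\equiv 0$ thanks to the positivity of $a_k,b_k$ from $(A)$. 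The hypotheses $(F_2),(F_4^\infty),(F_5^\infty),(F_6^\infty)$ only read $F_k$ on $[0,+\infty)$ (at worst through the harmless inequality $\max_{\abs{\xi}\le t}\tilde F_k(\xi)\le\max_{\abs{\xi}\le t}F_k(\xi)$), so they survive the truncation.

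The heart of the argument is the Ricceri liminf bound $\gamma := \liminf_{r\to+\infty}\varphi(r) < 1$. From $\Phi(u)\le r$ one extracts $\frac{b_k}{p^+}\abs{u_k}^{p_k}\le r$, hence $\abs{u_k}\le b_k^{-1/p_k}(p^+ r)^{1/p_k}$; once $r$ is large enough that $p^+ r\ge 1$, this upgrades to the uniform estimate $\norm{u}_\infty \le \alpha(p^+ r)^{1/p^-}$ with $\alpha$ as in \eqref{alfa}. Plugging $u=0$ into the infimum defining $\varphi(r)$ and passing to the liminf along $r_n := t_n^{p^-}/(p^+\alpha^{p^-})$, where $\{t_n\}$ realizes the liminf in $(F_4^\infty)$, yields
\begin{equation*}
\gamma \le \liminf_{n\to\infty}\frac{\sup_{\Phi(v)\le r_n}\Psi(v)}{r_n} \le p^+\alpha^{p^-}\liminf_{t\to+\infty}\frac{\sum_{k\in\Z}\max_{\abs{\xi}\le t}F_k(\xi)}{t^{p^-}} < 1,
\end{equation*}
so $1\in(0,1/\gamma)$ and Ricceri's theorem is applicable at $\lambda=1$.

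Next I would rule out the global-minimum alternative by showing that $I$ is unbounded from below. Under $(F_5^\infty)$, fix $\varepsilon>0$ and pick sequences $k_n\to+\infty$ (the case $k_n\to-\infty$ is symmetric) and $t_n\to+\infty$ with $F_{k_n}(t_n) > \left(\tfrac{1}{p^-}+\varepsilon\right)(a_{k_n-1}+a_{k_n}+b_{k_n})t_n^{p^+}$. Let $v_n\in E$ be supported at the single index $k_n$ with value $t_n$. A direct evaluation using $t_n\ge 1$ gives
\begin{equation*}
\Phi(v_n) \le \tfrac{1}{p^-}(a_{k_n-1}+a_{k_n}+b_{k_n})t_n^{p^+}, \qquad \Psi(v_n) = F_{k_n}(t_n),
\end{equation*}
so $I(v_n)\le -\varepsilon(a_{k_n-1}+a_{k_n}+b_{k_n})t_n^{p^+}\to-\infty$. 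Under $(F_6^\infty)$ the same bump construction works with $k_n\equiv k_0$ fixed. Ricceri's dichotomy then produces a sequence $\{u_n\}$ of critical points of $I$ with $\Phi(u_n)\to+\infty$; coercivity of $\Phi$ (immediate from $\inf_k b_k > 0$) upgrades this to $\norm{u_n}_E\to+\infty$, and the truncation above promotes each $u_n$ to a positive homoclinic solution of \eqref{eq}.

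The main obstacle I anticipate is the estimate on $\gamma$: one must track the mixed-exponent embedding $\norm{u}_\infty\le \alpha(p^+ r)^{1/p^-}$ sharply enough for the explicit constant $1/(p^+\alpha^{p^-})$ in $(F_4^\infty)$ to match, and also confirm that the discrete subsequence $r_n$ is admissible for the full liminf $r\to+\infty$ defining $\gamma$. The swap between the index-dependent exponents $p_k$ and the global bounds $p^\pm$ pervades every estimate, but is essentially the same bookkeeping that governs the constant-exponent predecessors in \cite{St2016ADE, St2016, St2017}.
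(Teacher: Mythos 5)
Your proposal is correct and follows essentially the same route as the paper: Ricceri's principle (Theorem \ref{cpt}) applied at $\lambda=1$ to $J=\Phi-\Psi$ with the truncated nonlinearity $f_k^+$, the estimate $\gamma\le p^+\alpha^{p^-}A<1$ obtained through the test levels $r_n=\tfrac{1}{p^+}\left(t_n/\alpha\right)^{p^-}$ and the embedding $\norm{u}_\infty\le\alpha(p^+ r)^{1/p^-}$, and the single-bump sequences under $(F_5^\infty)$ or $(F_6^\infty)$ showing $J$ is unbounded below, which excludes alternative $(ii_a)$. The only nitpick is the final step: to pass from $\Phi(u^n)\to+\infty$ to $\norm{u^n}_E\to+\infty$ one needs that $\Phi$ is bounded on bounded sets (i.e.\ $\Phi(u)\le\tfrac{1}{p^-}\rho_E(u)$ together with Proposition \ref{osz-Orl}), not coercivity, which is the reverse implication --- but this is immediate from Propositions \ref{osz-Orl} and \ref{osz-Phi}, exactly as the paper invokes them.
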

To get our goal in Theorems \ref{tw1}, \ref{tw2} a direct variational approach is used, introduced in \cite{FK} and then used in such papers as 
\cite{KMR, KMT, KRV, Ra, St2017}. In the proof for Theorems \ref{tw2'}, \ref{tw1'} the
general variational principle of Ricceri is used, which was applied in \cite{BCh,BMBR,St2016ADE,St2017}. 



The plan of the paper is as follows: Section 2 is devoted to the abstract framework, in Section 3 proofs of Theorems \ref{tw1}-\ref{tw1'} and examples are presented.

\section{Abstract framework}
Recall that the sequence $\{p_k\}_{k\in\Z}$ is bounded with $1< p^-\le p^+<\infty$. We introduce 
$$
\lpk=\left\{ u:\Z\to\R :\
\sum_{k\in \Z}\abs{u_k}^{p_k} <\infty \right\} 
$$
with norm 
$$
\norm{u}_{p_k} =\inf\left\{\eta>0: \sum_{k\in \Z}\abs{\tfrac{ u_k}{\eta}}^{p_k}\le1\right\},
$$
and 
$$
E=\left\{ u\in\lpk :\
\sum_{k\in \Z}\left( a_k\abs{\nabla^+ u_k}^{p_k}+b_k\abs{u_k}^{p_k}\right) <\infty \right\} 
$$
with norm
$$
\norm{u}_E =\inf\left\{\eta>0: \sum_{k\in \Z}\left( a_k\abs{\tfrac{\nabla^+ u_k}{\eta}}^{p_k}+b_k\abs{\tfrac{u_k}{\eta}}^{p_k}\right)\le1\right\}.
$$
\begin{proposition}
For $u\in E$ we have
\begin{equation}\label{alfa-osz}
\norm{u}_{\infty}\le \alpha\norm{u}_E,
\end{equation}
where $\norm{u}_{\infty}=\sup\{\abs{u_k}:k\in\Z\}$ and $\alpha$ is defined in \eqref{alfa}.
\end{proposition}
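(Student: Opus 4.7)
The plan is to extract a pointwise bound directly from the Luxemburg-type definition of $\|\cdot\|_E$. Fix $u \in E$; we may assume $u \neq 0$ and set $\eta := \|u\|_E > 0$. The first step is to verify that the defining modular is already $\le 1$ at $\eta$ itself, not merely for $\eta' > \eta$. This is a standard Luxemburg-norm fact: choosing $\eta_n \downarrow \eta$ with $\sum_{k\in\Z}(a_k|\nabla^+ u_k/\eta_n|^{p_k} + b_k|u_k/\eta_n|^{p_k}) \le 1$, each summand is nonnegative and increases as $\eta_n$ decreases (because $p_k > 1$), so monotone convergence yields
\[
\sum_{k\in \Z}\left( a_k\Bigl|\tfrac{\nabla^+ u_k}{\eta}\Bigr|^{p_k}+b_k\Bigl|\tfrac{u_k}{\eta}\Bigr|^{p_k}\right)\le 1.
\]

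With this in hand, every term on the left is bounded by $1$. In particular, for any fixed $k_0 \in \Z$,
\[
b_{k_0}\Bigl|\tfrac{u_{k_0}}{\eta}\Bigr|^{p_{k_0}} \le 1,
\]
which rearranges to $|u_{k_0}| \le b_{k_0}^{-1/p_{k_0}}\eta \le \alpha\eta = \alpha\|u\|_E$ by the definition \eqref{alfa} of $\alpha$. Taking the supremum over $k_0$ gives \eqref{alfa-osz}.

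There is no real obstacle here; the one place that deserves care is justifying the modular inequality at $\eta = \|u\|_E$, which I would handle by the monotone convergence argument sketched above (the summability of the modular for $\eta$ slightly larger than $\|u\|_E$, guaranteed by $u \in E$, makes the interchange of sum and limit immediate). The remainder is a one-line extraction of a single term from a nonnegative convergent series.
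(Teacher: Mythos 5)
Your proof is correct and takes essentially the same route as the paper's: both extract the single-term bound $b_k\abs{u_k/\eta}^{p_k}\le 1$ from the defining modular and conclude $\abs{u_k}\le b_k^{-1/p_k}\eta\le\alpha\eta$. The only difference is that the paper works at $\eta\le\norm{u}_E+\varepsilon$ and lets $\varepsilon\to0$, whereas you justify the modular inequality at $\eta=\norm{u}_E$ itself via monotone convergence; both are valid.
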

\begin{proof}
Let $u\in E$ and $\varepsilon>0$. Choose $\eta>0$ such that $\norm{u}_E\le\eta\le\norm{u}_E+\varepsilon$. Then $b_k\abs{\tfrac{u_k}{\eta}}^{p_k}\le1$, and so $\abs{u_k}\le\alpha\eta$, for any $k\in\Z$. This implies that $\norm{u}_\infty\le\alpha(\norm{u}_E+\varepsilon)$. Passing to $\varepsilon\to0$ we get $\norm{u}_\infty\le\alpha\norm{u}_E$.
\end{proof}

\begin{theorem}\cite{KA}
$(\lpk,\norm{\cdot}_{p_k} )$ and $(E,\norm{\cdot}_E )$ are a reflexive Banach space and embeddings $E\hookrightarrow \lpk$ and  $E\hookrightarrow  l^{p^+}(\Z)$ are compact.
\end{theorem}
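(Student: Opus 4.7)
The statement is cited from \cite{KA}, but let me sketch how the proof goes so that the reader has the essential ingredients in mind.

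The plan is to treat $\ell^{p_k}$ first as the standard variable-exponent Musielak--Orlicz sequence space associated with the modular $\rho_{p_k}(u)=\sum_{k\in\Z}|u_k|^{p_k}$. Since $1<p^-\le p^+<\infty$, a direct modular argument gives the elementary comparisons $\|u\|_{p_k}^{p^+}\le\rho_{p_k}(u)\le\|u\|_{p_k}^{p^-}$ when $\|u\|_{p_k}\le 1$ and the reverse pair of inequalities when $\|u\|_{p_k}\ge 1$. From these, completeness follows from a standard Cauchy-sequence + Fatou argument (pass to a pointwise limit using the fact that coordinate evaluations are continuous, then use the modular bound to show the limit lies in $\ell^{p_k}$ and the convergence is in norm). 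Reflexivity then comes from the fact that the modular $\rho_{p_k}$ is uniformly convex in each coordinate with uniform modulus of convexity (because $p^-\ge p^->1$ and $p^+<\infty$), giving uniform convexity of the Luxemburg norm and hence reflexivity via Milman--Pettis.

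For $E$, I would realize it isometrically as a closed subspace of a reflexive product. Concretely, consider the map
\[
T\colon E\to \ell^{p_k}(\Z,a)\times \ell^{p_k}(\Z,b),\qquad Tu=(\nabla^+u,\,u),
\]
where the factors are weighted variable-exponent spaces carrying the modulars $\sum_k a_k|v_k|^{p_k}$ and $\sum_k b_k|w_k|^{p_k}$, each a reflexive Banach space by the argument above. Under the natural product Luxemburg norm, $T$ is an isometry. To see that $T(E)$ is closed, I would take a Cauchy sequence in $T(E)$, extract a limit $(v,w)$ in the product, observe that $w_n\to w$ and $\nabla^+ w_n\to\nabla^+ w$ pointwise (again using continuity of coordinate evaluations, together with the bound \eqref{alfa-osz}), and conclude $v=\nabla^+ w$, so $(v,w)=Tw\in T(E)$. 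Since a closed subspace of a reflexive Banach space is reflexive, so is $E$.

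For the compact embeddings, the decisive hypothesis is $b_k\to\infty$ from $(A)$. Take a sequence $(u^{(n)})$ bounded in $E$. By reflexivity, a subsequence converges weakly to some $u\in E$; replacing $u^{(n)}$ by $u^{(n)}-u$, I may assume $u=0$. Since evaluation at each $k$ is a continuous linear functional on $E$ (by \eqref{alfa-osz} and the $\ell^\infty$ bound it gives), $u^{(n)}_k\to 0$ pointwise. Given $\e>0$, pick $N$ such that $b_k\ge 1/\e$ for $|k|>N$; then for every $n$,
\[
\sum_{|k|>N}\bigl|u^{(n)}_k\bigr|^{p_k}\le \e\sum_{|k|>N}b_k\bigl|u^{(n)}_k\bigr|^{p_k}\le \e\,\rho_E(u^{(n)})\le C\e
\]
with $C$ independent of $n$ because $(u^{(n)})$ is bounded in $E$. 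On the finite set $|k|\le N$ pointwise convergence gives $\sum_{|k|\le N}|u^{(n)}_k|^{p_k}\to 0$. Together these give $\rho_{p_k}(u^{(n)})\to 0$, hence $\|u^{(n)}\|_{p_k}\to 0$. The same argument, with $|u^{(n)}_k|^{p^+}$ in place of $|u^{(n)}_k|^{p_k}$ (and using $\|u^{(n)}\|_\infty\le\alpha\|u^{(n)}\|_E$ bounded to control the $p^+$-powers), yields compactness of $E\hookrightarrow \ell^{p^+}(\Z)$.

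The main obstacle, in my view, is bookkeeping in the variable-exponent modular estimates: the interplay between $\|\cdot\|_E$ and $\rho_E$ is not a clean power relation when some components of $u$ are small and others are large, so care is needed in the tail estimate (for instance one should normalize $u^{(n)}$ so that $\rho_E(u^{(n)})$ itself, not merely $\|u^{(n)}\|_E$, is bounded, which follows once $\|u^{(n)}\|_E\le M$ by the standard modular/norm comparisons on $\ell^{p_k}$). Everything else is a direct transcription of Musielak--Orlicz sequence space arguments to the weighted, discrete setting on $\Z$.
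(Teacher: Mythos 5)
The paper offers no proof of this statement at all --- it is imported verbatim from the cited reference \cite{KA} --- so there is nothing in the text to compare your argument against line by line. That said, your sketch is essentially the standard Musielak--Orlicz route that the cited source follows, and I see no genuine gap: the modular/norm comparisons under $1<p^-\le p^+<\infty$, reflexivity of the (weighted) variable-exponent sequence spaces, realization of $E$ as a closed subspace of a reflexive product via $u\mapsto(\nabla^+u,u)$, and the compactness argument combining pointwise convergence (evaluations are continuous by \eqref{alfa-osz}) with the tail estimate driven by $b_k\to\infty$ are all correct and are exactly the ingredients one needs. Two small points of polish: (i) your justification of uniform convexity (``uniformly convex in each coordinate... because $p^-\ge p^->1$'') is garbled as written; either invoke the known theorem that variable-exponent Lebesgue/sequence spaces with $1<p^-\le p^+<\infty$ are uniformly convex, or sidestep it entirely by identifying $(\lpk)^*$ with $l^{p_k'}(\Z)$, $p_k'=p_k/(p_k-1)$, which is the more elementary route for sequence spaces; (ii) for the embedding into $l^{p^+}(\Z)$ you should make explicit the inequality $|u_k|^{p^+}\le\max\{1,\norm{u}_\infty\}^{p^+-p_k}|u_k|^{p_k}$ together with $\inf_k b_k>0$ (which follows from $b_k>0$ and $b_k\to\infty$), since without it the modular $\sum_k|u_k|^{p^+}$ is not directly controlled by $\rho_E$. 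Neither point is a flaw in the strategy; both are routine to repair.
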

Let
$$
\rho_{E}(u):=\sum_{k\in \Z}\left( a_k\abs{\nabla^+ u_k}^{p_k}+b_k\abs{u_k}^{p_k}\right), \quad u\in E.
$$
It is easy to see that
\begin{proposition}\label{osz-Orl}\cite{KA}
For $u\in E$ we have the following relations:
\begin{itemize}
\item[(i)] $\norm{u}_{E}<1$ $(=1,$ $>1)$  $\Leftrightarrow$ $\rho_{E}(u)<1$ $(=1,$ $>1)$;
\item[(ii)] $\norm{u}_{E}\le1$   $\Rightarrow$ $\norm{u}^{p^+}_E\le\rho_{E}(u)\le\norm{u}^{p^-}_E$;
\item[(iii)] $\norm{u}_{E}\ge1$   $\Rightarrow$ $\norm{u}^{p^-}_E\le\rho_{E}(u)\le\norm{u}^{p^+}_E$;
\item[(iv)] $\rho_E(u^n)\to0$, as $n\to\infty$   $\Leftrightarrow$ $\norm{u^n}_E\to0$, as $n\to\infty$.
\end{itemize}
\end{proposition}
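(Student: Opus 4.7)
The plan is a standard Luxemburg / Musielak--Orlicz modular argument. Fix $u\in E\setminus\{0\}$ and define
\[
\phi(\eta) := \rho_E(u/\eta) = \sum_{k\in\Z}\left(a_k\abs{\nabla^+ u_k/\eta}^{p_k} + b_k\abs{u_k/\eta}^{p_k}\right),\qquad \eta>0.
\]
The first step is to check that $\phi$ is continuous on $(0,+\infty)$, strictly decreasing, with $\phi(\eta)\to+\infty$ as $\eta\to 0^+$ and $\phi(\eta)\to 0^+$ as $\eta\to+\infty$. Strict monotonicity is immediate termwise (since every $p_k>1$ and $u\neq 0$), and the two limits follow from the monotone convergence theorem applied to the series. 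From this one extracts the key identity $\phi(\norm{u}_E)=1$; that is, the infimum in the Luxemburg-type definition of $\norm{u}_E$ is attained. Part (i) is then immediate: $\rho_E(u)=\phi(1)$ is $<1$, $=1$, or $>1$ exactly when $1$ lies respectively to the right of, at, or to the left of the unique zero of $\phi-1$, which is $\norm{u}_E$.

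For (ii), assume $\norm{u}_E\le 1$ and put $\eta=\norm{u}_E$. Then $1/\eta\ge 1$, hence $(1/\eta)^{p^-}\le(1/\eta)^{p_k}\le(1/\eta)^{p^+}$ for every $k\in\Z$. Sandwiching $\phi(\eta)=1$ between $(1/\eta)^{p^-}\rho_E(u)$ and $(1/\eta)^{p^+}\rho_E(u)$ and rearranging yields $\norm{u}_E^{p^+}\le\rho_E(u)\le\norm{u}_E^{p^-}$. Part (iii) is the same calculation with $\eta\ge 1$, which reverses the comparison between the powers of $1/\eta$ and therefore swaps the roles of $p^-$ and $p^+$. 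For (iv), combine (i) and (ii): if $\rho_E(u^n)\to 0$ then eventually $\rho_E(u^n)<1$, hence $\norm{u^n}_E<1$ by (i), and then (ii) yields $\norm{u^n}_E\le\rho_E(u^n)^{1/p^+}\to 0$; the converse uses the upper bound $\rho_E(u^n)\le\norm{u^n}_E^{p^-}$ from (ii) as soon as $\norm{u^n}_E<1$.

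The only delicate point is the continuity of $\phi$ at a given $\eta_0\in(0,+\infty)$, which is where the variable-exponent nature of the modular matters. Justifying the interchange of the limit $\eta_n\to\eta_0$ with the infinite sum calls for dominated convergence: for $\eta_n$ in a compact neighbourhood $[\eta^-,\eta^+]\subset(0,+\infty)$ of $\eta_0$, the factor $(1/\eta_n)^{p_k}$ is bounded uniformly in $k$ by $\max\{(1/\eta^-)^{p^+},(1/\eta^-)^{p^-}\}$, so each summand is dominated by a constant multiple of $a_k\abs{\nabla^+ u_k}^{p_k}+b_k\abs{u_k}^{p_k}$, which is summable because $u\in E$; this is exactly where the boundedness of $\{p_k\}$ enters. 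The trivial case $u=0$ (and $u^n=0$) is handled separately and is immediate from the definitions.
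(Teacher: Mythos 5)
Your proof is correct and complete: the key identity $\rho_E(u/\norm{u}_E)=1$ for $u\neq 0$ (via continuity and strict monotonicity of $\eta\mapsto\rho_E(u/\eta)$, justified by dominated convergence using the boundedness of $\{p_k\}$) is exactly the right pivot, and the sandwich estimates in (ii)--(iii) and the deduction of (iv) all check out. The paper itself gives no proof of this proposition --- it is quoted from \cite{KA} with the remark that it is easy to see --- and your argument is the standard Luxemburg/modular argument that the cited source uses, so there is nothing to flag.
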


Now, we define a functional $\Phi$ on $E$ by formula 
		\begin{equation}\label{Phi}
	\Phi (u):=\sum_{k\in \Z}\left( \tfrac{a_k}{p_k}\abs{\nabla^+u_k}^{p_k}+\tfrac{b_k}{p_k}\abs{u_k}^{p_k}\right),\ u\in E.
	\end{equation}
Note that  $p^->1$ implies that $\Phi:E\to\R$ is well defined. By the definition of $\Phi$ and Proposition \ref{osz-Orl} we get 
	\begin{proposition}\label{osz-Phi}
	For $u\in E$ we have the following inequalities:
	\begin{itemize}
	\item[(i)] $\tfrac{1}{p^{+}}\rho_{E}(u)\le \Phi(u)\le \tfrac{1}{p^{-}}\rho_{E}(u)$;
	\item[(ii)] $\norm{u}_E\le 1\ \Rightarrow \ \norm{u}_{\infty}\le\alpha\norm{u}_E\le\alpha\left(p^+\Phi(u)\right)^{\tfrac{1}{p^+}}$;
	\item[(ii)] $\norm{u}_E\ge 1\ \Rightarrow \ \norm{u}_{\infty}\le\alpha\norm{u}_E\le\alpha\left(p^+\Phi(u)\right)^{\tfrac{1}{p^-}}$.
	\end{itemize}
	\end{proposition}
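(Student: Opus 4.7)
The proposition consists of three short inequalities that all follow directly from the two results stated immediately before it, namely the bound $\norm{u}_\infty\le\alpha\norm{u}_E$ from equation \eqref{alfa-osz} and the modular/norm comparison in Proposition \ref{osz-Orl}. The plan is to handle part (i) by a direct pointwise estimate on the summand, and then to chain the two preceding results together for parts (ii) and (iii).

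For part (i), I would observe that since $p^-\le p_k\le p^+$ for every $k\in\Z$ we have $\tfrac{1}{p^+}\le\tfrac{1}{p_k}\le\tfrac{1}{p^-}$. Multiplying the nonnegative quantity $a_k|\nabla^+u_k|^{p_k}+b_k|u_k|^{p_k}$ by this factor and summing over $k\in\Z$ gives
\[
\tfrac{1}{p^+}\rho_E(u)\;\le\;\sum_{k\in\Z}\Bigl(\tfrac{a_k}{p_k}|\nabla^+u_k|^{p_k}+\tfrac{b_k}{p_k}|u_k|^{p_k}\Bigr)\;\le\;\tfrac{1}{p^-}\rho_E(u),
\]
which is exactly $\tfrac{1}{p^+}\rho_E(u)\le\Phi(u)\le\tfrac{1}{p^-}\rho_E(u)$.

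For part (ii), assume $\norm{u}_E\le 1$. The first inequality $\norm{u}_\infty\le\alpha\norm{u}_E$ is just \eqref{alfa-osz}. For the second, combine Proposition \ref{osz-Orl}(ii), which gives $\norm{u}_E^{p^+}\le\rho_E(u)$, with the left half of part (i), which gives $\rho_E(u)\le p^+\Phi(u)$. This yields $\norm{u}_E^{p^+}\le p^+\Phi(u)$, hence $\norm{u}_E\le(p^+\Phi(u))^{1/p^+}$, and multiplying by $\alpha$ finishes this case.

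Part (iii) is the analogous statement for $\norm{u}_E\ge 1$, using Proposition \ref{osz-Orl}(iii) (which gives $\norm{u}_E^{p^-}\le\rho_E(u)$) in place of (ii); the same chaining with $\rho_E(u)\le p^+\Phi(u)$ produces $\norm{u}_E\le(p^+\Phi(u))^{1/p^-}$ and then $\norm{u}_\infty\le\alpha(p^+\Phi(u))^{1/p^-}$. There is no real obstacle here: the whole proposition is a bookkeeping exercise that bundles together \eqref{alfa-osz}, the Luxemburg-type norm/modular comparison of Proposition \ref{osz-Orl}, and the elementary inequality $\tfrac{1}{p^+}\le\tfrac{1}{p_k}\le\tfrac{1}{p^-}$, so the only thing to be careful about is using the right case of Proposition \ref{osz-Orl} ($\le 1$ versus $\ge 1$) in each branch.
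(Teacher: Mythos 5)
Your proof is correct and follows exactly the route the paper intends: the paper offers no written proof, merely noting that the proposition follows "by the definition of $\Phi$ and Proposition \ref{osz-Orl}," and your argument — the pointwise bound $\tfrac{1}{p^+}\le\tfrac{1}{p_k}\le\tfrac{1}{p^-}$ for part (i), then chaining $\rho_E(u)\le p^+\Phi(u)$ with the appropriate case of Proposition \ref{osz-Orl} and the estimate \eqref{alfa-osz} for parts (ii) and (iii) — is precisely that omitted verification. No gaps.
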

	\begin{lemma}\label{phi-C1}
	Assume that $(A)$ is satisfied. Then $\Phi\in C^1(E,\R)$ with
	\begin{equation}\label{poch-phi}
	\left\langle \Phi'(u),v \right\rangle=\sum_{k\in\Z}\left(a_k\abs{\nabla^+u_k}^{p_k-2}\nabla^+u_k\nabla^+v_k+b_k\abs{u_k}^{p_k-2}u_kv_k\right),
	\end{equation}
	for all $u,v\in E$. Moreover,  $\Phi$ is a sequentially weakly lower semicontinuous functional on $E$.
	\end{lemma}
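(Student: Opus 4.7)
The plan is to establish the lemma in three stages: (1) Gâteaux differentiability with the claimed formula, (2) continuity of $\Phi'\colon E\to E^*$, and (3) weak lower semicontinuity.

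For stage (1), I would fix $u,v\in E$ and set $g(t):=\Phi(u+tv)$. Term by term, each summand $\tfrac{a_k}{p_k}|\nabla^+u_k+t\nabla^+v_k|^{p_k}+\tfrac{b_k}{p_k}|u_k+tv_k|^{p_k}$ is a $C^1$ function of $t$ whose $t$-derivative at $t=0$ is exactly the $k$-th term in \eqref{poch-phi}. To interchange differentiation and summation, I would apply the dominated convergence theorem using the mean value inequality
$$
\left|\frac{|x+sy|^{p_k}-|x|^{p_k}}{s}\right|\le p^{+}|y|\bigl(|x|+|y|\bigr)^{p^{+}-1}\quad(|s|\le 1),
$$
together with $(|x|+|y|)^{p_k}\le 2^{p^{+}-1}(|x|^{p_k}+|y|^{p_k})$. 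Splitting the resulting bounds via Young's inequality yields a majorant controlled by $\rho_E(u)+\rho_E(v)$, which is finite since $u,v\in E$.

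For stage (2), I need to show that $u_n\to u$ in $E$ implies $\Phi'(u_n)\to\Phi'(u)$ in $E^*$. Testing against $v\in E$ with $\|v\|_E\le 1$, I would estimate
$$
\bigl|\langle\Phi'(u_n)-\Phi'(u),v\rangle\bigr|\le \sum_{k\in\Z}\Bigl(a_k\bigl|\,|\nabla^+u_n^k|^{p_k-2}\nabla^+u_n^k-|\nabla^+u_k|^{p_k-2}\nabla^+u_k\bigr|\,|\nabla^+v_k|+(\text{analogous }b_k\text{ term})\Bigr).
$$
Applying Young's inequality with conjugate exponents $p_k$ and $p_k/(p_k-1)$, the right-hand side is bounded by a sum whose decay follows from the fact that $\rho_E(u_n-u)\to 0$ (by Proposition \ref{osz-Orl}(iv)) and the continuity on $\R$ of $\xi\mapsto|\xi|^{p-2}\xi$ uniformly over $p\in[p^-,p^+]$ on bounded sets. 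A clean way to make this rigorous is to first extract a subsequence with pointwise convergence (guaranteed by the compact embedding $E\hookrightarrow l^{p^{+}}(\Z)$), apply the continuity of the nonlinearity termwise, and then dominate by a summable majorant to conclude strong convergence in $E^*$.

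For stage (3), the simplest route is to observe that $\Phi$ is a sum of convex functionals $t\mapsto \tfrac{a_k}{p_k}|\cdot|^{p_k}$ and $t\mapsto\tfrac{b_k}{p_k}|\cdot|^{p_k}$ (convex since $p_k\ge 1$), hence $\Phi$ itself is convex; being continuous on $E$ (by stages (1)--(2) or directly from Proposition \ref{osz-Phi}(i) and Proposition \ref{osz-Orl}(iv)), it is weakly sequentially lower semicontinuous. An alternative argument uses the compact embedding $E\hookrightarrow l^{p^+}(\Z)$: weak convergence $u_n\rightharpoonup u$ in $E$ yields pointwise convergence of $u_n^k$, whence Fatou's lemma applied to the nonnegative summands gives $\Phi(u)\le\liminf_{n}\Phi(u_n)$.

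The main obstacle is stage (2): the variable exponent obstructs a single Hölder estimate, so the majorant for the dominated convergence argument must be assembled carefully, treating the contributions of indices where $|\nabla^+u_n^k|,|u_n^k|$ are large or small separately, in order to exploit $\rho_E(u_n-u)\to 0$ rather than $\|u_n-u\|_E\to 0$ directly.
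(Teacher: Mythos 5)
The paper does not actually prove this lemma: it declares the proof ``standard'' and refers to \cite{ChTA}, \cite{IT}, \cite{St2017}, so there is no in-paper argument to compare against. Your three-stage outline is precisely the standard argument those references carry out, and it is sound in strategy: term-by-term Gateaux differentiation justified by a mean-value majorant and dominated convergence, continuity of $\Phi'$ via pointwise convergence along subsequences plus a summable majorant and the generalized H\"older inequality in the variable-exponent sequence space, and weak lower semicontinuity either from convexity plus norm-continuity or from Fatou (for the Fatou route you do not even need the compact embedding --- the coordinate functionals $u\mapsto u_k$ are already norm-continuous on $E$ by \eqref{alfa-osz}, so weak convergence gives pointwise convergence directly). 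One technical slip: the displayed majorant $p^{+}|y|(|x|+|y|)^{p^{+}-1}$ is not a valid uniform bound for $(|x|+|y|)^{p_k-1}$ when $|x|+|y|<1$, since then raising to the larger exponent $p^{+}-1$ \emph{decreases} the value; you should keep the exponent $p_k-1$ (or use $\max\{(|x|+|y|)^{p^{-}-1},(|x|+|y|)^{p^{+}-1}\}$), which is in fact what your subsequent Young-inequality step with exponents $p_k$ and $p_k/(p_k-1)$ implicitly does, so the argument self-corrects. Stage (2) remains a sketch rather than a proof --- to get convergence in $E^{*}$ uniformly over $\norm{v}_E\le 1$ you must pass from the modular $\sum_k a_k\abs{\,\abs{\nabla^+u^n_k}^{p_k-2}\nabla^+u^n_k-\abs{\nabla^+u_k}^{p_k-2}\nabla^+u_k\,}^{p_k/(p_k-1)}\to 0$ to the Luxemburg dual norm via the analogue of Proposition \ref{osz-Orl}(iv) for the conjugate exponents --- but you correctly identify this as the delicate point, and the route you describe is the one used in the cited sources.
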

	\begin{proof} The standard proof is omitted, see \cite{ChTA}, \cite{IT}, \cite{St2017}.
\end{proof}
The following notation will be used in the rest of the paper:
$$
t^{+}=\max\{t,0\},\quad t^-=\min\{0,t\},\quad f^+_k(t)=f_k(t^+),\quad F^+_k(t)=\int^t_{0}f^+_k(s)\,ds,
$$
for $t\in\R$, $k\in\Z$. 
\begin{remark}
If $(F_1)$ is satisfied, then $f^+_k$ is a continuous function on $\R$ for $k\in\Z$. Moreover, notice that $F^+_k(t)=F_k(t)$ for $t\ge0$, $k\in\Z$. This implies that if one of conditions $(F^0_4)$, $(F^\infty_4)$, $(F^0_5)$, $(F^\infty_5)$, $(F^0_6)$, $(F^\infty_6)$ is satisfied for $F_k$, then this conditions is satisfied for $F^+_k$, too.
\end{remark}

We define a functional $\Psi$ on $\lpk$ by formula
\begin{equation}\label{Psi}
	\Psi (u):=\sum_{k\in \Z}F^+_k(u_k), \ u\in \lpk. 
	\end{equation}
	\begin{lemma}\label{psi-C1}
	Assume that $(F_1)$, $(F_2)$ hold. Then $\Psi\in C^1(\lpk,\R)$ with
	\begin{equation}\label{poch-psi}
	\left\langle \Psi'(u),v \right\rangle=\sum_{k\in\Z}f^+_k(u_k)v_k,
	\end{equation}
	for all $u,v\in \lpk$. Moreover, $\Psi$ is a sequentially weakly  continuous functional on $E$. 
	\end{lemma}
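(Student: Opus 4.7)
The plan is to verify four things in turn: (i) $\Psi$ is well-defined on $\lpk$; (ii) $\Psi$ admits the Gateaux derivative given by \eqref{poch-psi}; (iii) $u\mapsto\Psi'(u)$ is norm-continuous from $\lpk$ into its dual, so that $\Psi\in C^1(\lpk,\R)$; and (iv) sequential weak continuity on $E$ follows from the compact embedding $E\hookrightarrow\lpk$ already quoted. I expect step (iii) to be the main obstacle, since it requires quantitative control of the dual norm of differences via a dominated convergence argument on the modular of $l^{q_k}$.

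For (i) and (ii), I would start from the observation that $u\in\lpk$ forces $u_k\to 0$, so $T:=\norm{u}_\infty<\infty$; then $|F^+_k(u_k)|\le |u_k|\max_{|s|\le T}|f_k(s)|$ and absolute convergence of the series defining $\Psi(u)$ follows immediately from $(F_2)$. For the Gateaux derivative along $v\in\lpk$ I would apply the mean value theorem termwise,
$$
\frac{F^+_k(u_k+tv_k)-F^+_k(u_k)}{t}=f^+_k\bigl(u_k+\theta_k(t)\,tv_k\bigr)v_k,\qquad \theta_k(t)\in(0,1),
$$
and pass to $t\to 0$ by dominated convergence for series, with summable majorant $\norm{v}_\infty\max_{|s|\le T+1}|f_k(s)|$, again afforded by $(F_2)$. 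Linearity and continuity of $v\mapsto\sum_k f^+_k(u_k)v_k$ follow from the same estimate.

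For step (iii), I would use the duality $(\lpk)^*\cong l^{q_k}(\Z)$ with $q_k=p_k/(p_k-1)$ (and $q^+:=\sup_k q_k<\infty$ since $p^->1$) together with a H\"older-type inequality for variable exponents, thereby reducing norm-continuity of $\Psi'$ to the claim that whenever $u^n\to u$ in $\lpk$ the sequence $f^+_k(u^n_k)-f^+_k(u_k)$ tends to zero in $l^{q_k}(\Z)$. Norm convergence in $\lpk$ forces uniform boundedness of $\{u^n\}$ in $l^\infty$ by some $T'$, together with pointwise convergence $u^n_k\to u_k$ for each $k$ (via the $\lpk$-analogue of Proposition \ref{osz-Orl}(iv)). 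Continuity of $f^+_k$ then gives pointwise convergence of the differences to zero, while the uniform bound yields $|f^+_k(u^n_k)-f^+_k(u_k)|\le 2M_k$ with $M_k:=\max_{|s|\le T'}|f_k(s)|$. From $(F_2)$ we have $\sum_k M_k<\infty$ and $M_k\le 1$ for all but finitely many $k$, so $(2M_k)^{q_k}\le 2^{q^+}M_k$ eventually, providing a summable majorant for the $q_k$-th powers. Dominated convergence then forces $\sum_k|f^+_k(u^n_k)-f^+_k(u_k)|^{q_k}\to 0$, hence the $l^{q_k}$-norm vanishes.

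Finally, for (iv), if $u^n\rightharpoonup u$ in $E$, the compactness of the embedding $E\hookrightarrow\lpk$ yields strong convergence $u^n\to u$ in $\lpk$, and the continuity of $\Psi$ on $\lpk$ (implied by $\Psi\in C^1$) gives $\Psi(u^n)\to\Psi(u)$.
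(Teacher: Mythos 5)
Your proof is correct, but it takes a genuinely different route from the paper's in both halves. For the $C^1$ claim the paper does not argue intrinsically in $\lpk$ at all: it quotes from \cite{St2017} that, under $(F_2)$, the functional $u\mapsto\sum_{k}F^+_k(u_k)$ is continuously differentiable on the \emph{constant-exponent} space $l^{p^+}(\Z)$, and then simply composes with the continuous embedding $\lpk\hookrightarrow l^{p^+}(\Z)$, so the variable-exponent structure plays no role. You instead work directly in $\lpk$, which obliges you to invoke the duality $(\lpk)^*\cong l^{q_k}(\Z)$ and a variable-exponent H\"older inequality --- machinery the paper never needs and never states. Your dominated-convergence argument on the $q_k$-modular is sound: the two points that make it work, namely $q^+<\infty$ because $p^->1$, and $M_k^{q_k}\le M_k$ once $M_k\le 1$ (which happens for all but finitely many $k$ since $\sum_k M_k<\infty$), are exactly right, and passing from modular convergence to norm convergence is the $l^{q_k}$-analogue of Proposition \ref{osz-Orl}(iv). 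So your version is a legitimate, self-contained alternative at the cost of importing the dual-space formalism. For the sequential weak continuity on $E$ the paper also argues differently: it uses the mean value theorem to get $\abs{\Psi(u^n)-\Psi(u)}\le\norm{u^n-u}_{\infty}\sum_{k}\max_{\abs{t}\le T}\abs{f_k(t)}$ and combines this with compactness of $E\hookrightarrow l^{\infty}$ via a subsequence-of-subsequences argument, whereas you reuse the already-established continuity of $\Psi$ on $\lpk$ together with the compact embedding $E\hookrightarrow\lpk$ (and the standard fact that a compact operator on a reflexive space maps weakly convergent sequences to norm-convergent ones); yours is shorter given what precedes it, the paper's is independent of the $C^1$ part.
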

	\begin{proof}
	In \cite{St2017} there was shown that if $(F_2)$ is satisfied, then for any $p>1$ functional $\Psi:l^p\to\R$ given by formula $\Psi(u)=\sum_{k\in\Z}F^+_{k}(u_k)$, $u=\{u_k\}\in l^p$, is continuously differentiable on $l^p$. This and continuous emending $\lpk\hookrightarrow l^{p^+}$ gives that $\Psi\in C^1(\lpk,\R)$.
	
	Now we prove that $\Psi$ is a sequentially weakly continuous functional on $E$. Let $u^n\rightharpoonup u$ in $E$. Set $T:=\max\{\norm{u}_{\infty},\sup_{n\in\N}{\norm{u^n}_{\infty}}\}$. By mean value theorem, there exists $w^n_k$ for any $n\in\N$, $k\in\Z$ which belongs to interval with ends $u^n_k$, $u_k$ such that $F^+_k(u^n_k)-F^+_k(u_k)=f^+_k(w^n_k)(u^n_k-u_k)$. Hence
	\begin{align*}
	&\abs{\Psi(u^n)-\Psi(u)}\le\sum_{k\in\Z}\abs{F^+_k(u^n_k)-F^+_k(u_k)}=\sum_{k\in\Z}\abs{f^+_k(w^n_k)}\abs{u^n_k-u_k}
	\\
	&\le\norm{u^n-u}_{\infty}\sum_{k\in\Z}\max_{\abs{t}\le T}{\abs{f^+_k(t)}}\le\norm{u^n-u}_{\infty}\sum_{k\in\Z}\max_{\abs{t}\le T}\abs{f_k(t)}.
	\end{align*} 
	From the above estimation and the fact that emending $E\hookrightarrow l^\infty$ is compact, we have that for any subsequence $\{u^{m_n}\}$ of $\{u^n\}$ there exists $\{u^{l_{m_n}}\}\subset\{u^{m_n}\}$ such that $\norm{u^{l_{m_n}}-u}_{\infty}\to0$ and $\Psi(u^{l_{m_n}})\to\Psi(u)$, as $n\to\infty$. Hence $\Psi(u^n)\to\Psi(u)$ as $n\to\infty$.
	\end{proof}
	Let $J:E\rightarrow \R$ be a functional associated to problem 
	\eqref{eq} defined by 
	\begin{equation}\label{defJ}
	J(u)=\Phi (u)- \Psi (u),
	\end{equation}
	where $\Phi$ is given by \eqref{Phi} and $\Psi$ is given by	\eqref{Psi}. 
	\begin{proposition}\label{pom}
	Assume that $(A)$, $(F_1)$ and $(F_{2})$ are satisfied, then $J\in C^{1}(E,\R)$ and $J$ is a sequentially weakly lower semicontinuous functional on $E$. Moreover, every critical point $u\in E\setminus\{0\}$ of $J$ is a positive homoclinic solution to problem \eqref{eq}.
	\end{proposition}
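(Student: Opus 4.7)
The $C^{1}$ and weak-semicontinuity properties are immediate from the preceding lemmas. By Lemma \ref{phi-C1}, $\Phi\in C^{1}(E,\R)$ is sequentially weakly lower semicontinuous. By Lemma \ref{psi-C1}, $\Psi\in C^{1}(\lpk,\R)$; composing with the continuous embedding $E\hookrightarrow\lpk$ gives $\Psi\in C^{1}(E,\R)$, and the same lemma provides sequential weak continuity of $\Psi$ on $E$. Therefore $J=\Phi-\Psi\in C^{1}(E,\R)$ is sequentially weakly lower semicontinuous (swlsc $+$ sequentially weakly continuous $=$ swlsc), with derivative
\[
\langle J'(u),v\rangle=\sum_{k\in\Z}\Bigl(a_k|\nabla^+u_k|^{p_k-2}\nabla^+u_k\,\nabla^+v_k+b_k|u_k|^{p_k-2}u_kv_k-f^+_k(u_k)v_k\Bigr).
\]

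For a critical point $u\in E\setminus\{0\}$, my plan is first to recover the pointwise equation, then to deduce nonnegativity, and finally to obtain the decay at infinity. Testing with the coordinate unit vector $e_j$ (finitely supported, hence in $E$) and using summation by parts in the gradient sum gives
\[
-\nabla^{-}\bigl(a_j|\nabla^+u_j|^{p_j-2}\nabla^+u_j\bigr)+b_j|u_j|^{p_j-2}u_j=f^+_j(u_j),\qquad j\in\Z.
\]
To replace $f^+_j$ by $f_j$ it suffices to show $u\geq 0$; for this I would test with $v=u^-\in E$ (note $|u^-_k|\le|u_k|$ and a quick case check on the signs of $u_k,u_{k+1}$ gives $|\nabla^+u^-_k|\le|\nabla^+u_k|$, so indeed $u^-\in E$). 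The right-hand side of $\langle J'(u),u^-\rangle=0$ vanishes: $u^-_k=0$ where $u_k\ge 0$, and where $u_k<0$ one has $f^+_k(u_k)=f_k(0)=0$ by $(F_1)$. The zero-order term contributes $\sum_k b_k|u_k|^{p_k-2}u_k u^-_k=\sum_k b_k|u^-_k|^{p_k}$. The decisive step is the pointwise inequality
\[
|\nabla^+u_k|^{p_k-2}\nabla^+u_k\cdot\nabla^+u^-_k\ge|\nabla^+u^-_k|^{p_k},
\]
which I would verify by splitting on the four sign patterns of $(u_k,u_{k+1})$: equality holds when both are nonpositive, the expression is zero when both are nonnegative, and in each mixed case $\nabla^+u_k$ and $\nabla^+u^-_k$ share a sign while $|\nabla^+u_k|\ge|\nabla^+u^-_k|$. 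Summing,
\[
0=\langle J'(u),u^-\rangle\ge\sum_{k\in\Z}\bigl(a_k|\nabla^+u^-_k|^{p_k}+b_k|u^-_k|^{p_k}\bigr)=\rho_E(u^-),
\]
so $\rho_E(u^-)=0$ and hence $u^-\equiv 0$ by Proposition \ref{osz-Orl}(iv). Thus $u\ge 0$, $f^+_k(u_k)=f_k(u_k)$, and $u$ solves the equation in \eqref{eq}.

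The homoclinic condition $u_k\to 0$ as $|k|\to\infty$ follows from $u\in E$: since $\sum_{k\in\Z}b_k|u_k|^{p_k}<\infty$ and $b_k\to\infty$ by $(A)$, the sequence $|u_k|^{p_k}$ tends to $0$, and because $\{p_k\}\subset[p^-,p^+]\subset(1,\infty)$ is bounded, this forces $|u_k|\to 0$. The only nonroutine ingredient is the pointwise sign analysis that yields the $p_k$-gradient inequality above; the rest is direct manipulation within the variational framework already assembled.
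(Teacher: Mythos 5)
Your argument for the $C^{1}$ regularity, the weak lower semicontinuity, the recovery of the pointwise equation, the nonnegativity via the test function $u^{-}$, and the decay at infinity is correct, and it follows the same route as the paper; in fact you supply more detail than the paper does at the key step, since the paper simply asserts $\langle\Phi'(u),u^{-}\rangle\ge\rho_{E}(u^{-})$ while you verify the underlying pointwise inequality $|\nabla^{+}u_k|^{p_k-2}\nabla^{+}u_k\,\nabla^{+}u^{-}_k\ge|\nabla^{+}u^{-}_k|^{p_k}$ by a sign analysis (which checks out).

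However, there is a genuine gap: you prove only that $u\ge 0$, while the statement asserts that every nontrivial critical point is a \emph{positive} solution, and the paper means strict positivity ($u_k>0$ for all $k$) --- this is what makes the solutions of Theorems \ref{tw1}--\ref{tw1'} positive rather than merely nonnegative. The paper closes this with a discrete strong-maximum-principle type induction that your proposal omits entirely: suppose $u_i=0$ for some $i$. Evaluating the equation at $k=i$ and using $f_i(0)=0$ from $(F_1)$ together with $\nabla^{+}u_i=u_{i+1}$ and $\nabla^{+}u_{i-1}=-u_{i-1}$ yields
\begin{equation*}
a_i\abs{u_{i+1}}^{p_i-2}u_{i+1}+a_{i-1}\abs{u_{i-1}}^{p_{i-1}-2}u_{i-1}=0 .
\end{equation*}
Since $u\ge0$ and $a_k>0$ by $(A)$, both terms are nonnegative, hence both vanish and $u_{i-1}=u_{i+1}=0$; by induction $u\equiv 0$, contradicting $u\in E\setminus\{0\}$. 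Without this step you have only shown that nontrivial critical points are nonnegative homoclinic solutions, which is weaker than the claim. The fix is short, but it is a necessary part of the proof.
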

	\begin{proof}
	By Lemma \ref{phi-C1} and Lemma \ref{psi-C1} we get that $J\in C^{1}(E,\R)$ and $J$ is a sequentially weakly lower semicontinuous functional on $E$. It is easy, to see that, for $u$ a critical point of $J$, we have
	$$
	f^+_k(u_k)u^-_k=0, \quad 0=\left\langle J'(u),-u^- \right\rangle=-\left\langle \Phi'(u),u^- \right\rangle\le-\rho_{E}(u^-).
	$$
	Hence $u^-_k=0$, for $k\in\Z$ and $u$ is nonnegative. It means that $u$ is a homoclinic solution to \eqref{eq}. 
	If there exists $i\in\Z$ such that $u_i=0$, then by $(F_1)$ we get that
	$$
	a_i\abs{u_{i+1}}^{p_i-2}u_{i+1}+a_{i-1}\abs{u_{i-1}}^{p_i-2}u_{i-1}=0.
	$$
	By non-negativity of each term of the above equality and $(A)$ we get that $u_{i+1}=u_{i-1}=0$. By induction we get that $u_k=0$, $k\in\Z$. It means, that every nontirivial critical point of $J$ is a positive solution to problem \eqref{eq}.
	\end{proof}
	
	\section{Proofs of main Theorems}
	\subsection{Proofs of Theorems \ref{tw1}, \ref{tw2} }

\begin{proof}[Proof of Theorem \ref{tw1}]
	Let $\Phi ,\Psi $ and $J$ as in \eqref{defJ}. By Proposition \ref{pom} we need to find a sequence $\{u^{n}\}$ of critical points of $J$ whose norms tend to infinity. 
	
	Let $\{c_{n}\},\{d_{n}\}$ be sequences which satisfy assumption $(F_3^\infty)$. 
	
	We define the set 
	\begin{align*}
	W_{n}=\left\{ u\in E:\abs{u_k}\leq d_{n}\text{ for every }k\in \Z\right\},
	\end{align*}
	for every $n\in \N$.
	
	We have divided the rest of the proof into few steps.
	
	\textbf{Step 1.} Firstly, we show that, for every $n\in\N$, the functional $J$ is bounded from below on $W_{n}$ and its infimum on $W_{n}$ is attained.
	
	Let $n\in\N$. Note that 
	\begin{equation}\label{pom-1}
	\abs{F^+_k(t)}\le\int^{\abs{t}}_0\abs{f^+_k(s)}\,ds\le \abs{t}\cdot\max_{\abs{u}\le \abs{t}}\abs{f^+_k(u)}
	\end{equation}
	for any $t\in\R$. By the definition of $\Phi$ and \eqref{pom-1} we have 
	$$
	J(u)\ge -\Psi(u)=- \sum_{k\in\Z}F^+_k(u_k)\ge- d_n\sum_{k\in\Z}\max_{\abs{u}\le d_n}\abs{f^+_k(u)}>-\infty,
	$$
	for $u\in W_{n}$. Thus, $J$ is bounded from below on $W_{n}$. Let $\eta _{n}=\inf_{W_{n}}J$ and $\{\tilde{u}^l\}$ be a sequence in $E$ such that $\eta _{n}\leq J(\tilde{u}^{l})\leq \eta _{n}+\frac{1}{l}$ for all $l\in\N$. 
	\par\noindent
	By Propositions \ref{osz-Orl} and \ref{osz-Phi}, if $\norm{\tilde{u}^l}_E\ge1$, then
	$$
		\tfrac{1}{p^+}\norm{\tilde{u}^{l}}_E^{p^-}\le\tfrac{1}{p^+}\rho_E(\tilde{u}^{l})\le\Phi(\tilde{u}^{l})=J(\tilde{u}^l)+\sum_{k\in\Z}F^+_k(\tilde{u}^{l}_k)\le \eta _{n}+1+ d_n\sum_{k\in\Z}\max_{\abs{t}\le d_{n}}\abs{f_k(t)}.
	$$
	It means that $\{\tilde{u}^{l}\}$ is bounded in $E$. Taking into account that $E$ is a reflexive Banach space and $W_n$ is a weakly closed subset of $E$, we get that $\{\tilde{u}^{l}\}$ weakly converges in $E$ to some $u^{n}\in W_{n}$. By the sequentially weakly lower semicontinuity of $J$ we conclude that $J(u^{n})=\eta _{n}=\inf_{W_{n}}J$. 
	
		\textbf{Step 2.} Now, we show that if 
		$u^{n}=\argmin_{W_{n}}J$, $n\in\N$, then, $0\leq u^{n}_k\leq c_{n}$ for all $k\in\Z$.
	
	Fix $n\in\N$. Let $K=\{k\in \Z:u^{n}_k\notin [ 0,c_{n}]\}$ and suppose that $K\neq \emptyset$. Put
	\begin{equation*}
	K_{-}=\{k\in K:\ u^{n}_k<0\}\qquad \mbox{and}\qquad K_{+}=\{k\in K:\ u^n_k>c_{n}\}.
	\end{equation*}%
	Thus, $K=K_{-}\cup K_{+}$.
	
	We define the truncation function $\gamma :\R\to\R$ by formula $\gamma (s)=\min \{s^{+},c_{n}\}$. Now, put $
	w^n=\gamma \circ u^{n}$. Clearly, $w^{n}\in E$ and $w^{n}_k\in[ 0,c_{n}]$ for every $k\in \Z$. Thus $w^{n}\in W_{n}$.
	\begin{align*}
	&J(w^{n})-J(u^{n})=\sum_{k\in \Z}\tfrac{a_k}{p_k}\left( \abs{\nabla^+ w^n_k}^{p_k}-\abs{\nabla^+ u^n_k}^{p_k}\right) 
	\\
	&+\sum_{k\in \Z}\tfrac{a_k}{p_k}\left( \abs{w^n_k}^{p_k}-\abs{u^n_k}^{p_k}\right) -\sum_{k\in\Z}[F^+_k(w^{n}_k)-F^+_k(u^{n}_k)] =:I_{1}+I_{2}- I_{3}.
	\end{align*}
	Since $\gamma $ is a Lipschitz function with Lipschitz-constant equal to 1, we get
	$$
	I_{1} =\sum_{k\in \Z}\tfrac{a_k}{p_k}\left( \abs{\nabla^+ w^n_k}^{p_k}-\abs{\nabla^+ u^n_k}^{p_k}\right)=\sum_{k\in \Z}\tfrac{a_k}{p_k}\left( \abs{w^n_{k+1}-w^n_{k}}^{p_k}-\abs{u^n_{k+1}- u^n_k}^{p_k}\right) \le0. 
	$$
	Moreover, as $w^{n}_k=u^{n}_k$ for all $k\in\Z \setminus K$, $w^{n}_k=0$ for all $k\in K_{-}$, and $w^{n}_k=c_{n}$ for
	all $k\in K_{+}$, we have 
	$$
	I_{2} =\sum_{k\in \Z}\tfrac{b_k}{p_k}\left( \abs{w^{n}_k}^{p_k}-\abs{u^{n}_k}^{p_k}\right) =
	\sum_{k\in K_-}\tfrac{b_k}{p_k}\left( -\abs{u^{n}_k}^{p_k}\right) +\sum_{k\in K_+}\tfrac{b_k}{p_k}\left( c_n^{p_k}-\abs{u^{n}_k}^{p_k}\right) \le0.
	$$
	Next, we estimate $I_{3}$. Firstly, by the definition we have $F^+_k(s)=0$ for $s\leq 0$, $k\in \Z$, and consequently $\sum_{k\in K_{-}}(F^+_k(w^{n}_k)-F^+_k(u_{n}^k))=0$. As $u^n\in W_n$ we have $[c_n,u^n_k]\subset[c_n,d_n]$ for $k\in K_+$, and by $(F_3^\infty)$,  we have $f_k(t)\le0$, for $t\in[c_n,u^n_k]$, hence
	$$
	\sum_{k\in K_+}(F^+_k(c_n)-F^+_k(u^n_k))=\sum_{k\in K_+}(F_k(c_n)-F_k(u^n_k))=-\sum_{k\in K_+}\int\limits^{u^n_k}_{c_n}f_k(t)\,dt\ge0.
	$$
	Consequently,
	$$
	I_{3} =\sum_{k\in \Z}[F^+_k(w_{n}^k)-F^+_k(u_{n}^k)]=\sum_{k\in K_+}[F_k(w_{n}^k)-F_k(u_{n}^k)]\ge0.\nonumber
	 $$
	Combining above inequalities we get that 
	\begin{equation*}
	J(w^{n})-J(u^{n})\leq 0.
	\end{equation*}%
	But $J(w^{n})\geq J(u^{n})=\inf_{W_{n}}J$, since $w^{n}\in W_{n}$. So, every term in $J(w^{n})-J(u^{n})$ is equal to zero. In particular, $I_{2}=0$, hence
	\begin{equation*}
	\sum_{k\in K_{-}}\tfrac{b_k}{p_k}\abs{u^{n}_k}^{p_k}=\sum_{k\in
		K_{+}}\tfrac{b_k}{p_k}\left(c_{n}^{p_k}-\abs{u^{n}_k}^{p_k}\right)=0.
	\end{equation*}%
	By $(A)$, we get that $u^{n}_k=0$ for every $k\in K_{-}$ and $u^{n}_k=c_{n}$
	for every $k\in K_{+}$. By the definition of sets $K_{-}$ and $K_{+}$, we
	must have $K_{-}=K_{+}=\emptyset $, this contradicts to $K_{-}\cup K_{+}=K\neq
	\emptyset $; therefore $K=\emptyset$. 

	\textbf{Step 3.} Now, we show that if $u^{n}=\argmin_{W_{n}}J$, then $u^{n}$ is a critical point of $J$ for every $n\in\N$.
	
	It is sufficient to show that $u^{n}$ is a local minimum point of $J$ in $E$. Assuming the contrary, consider a sequence $\{v^{i}\}\subset E$ which converges to $u^{n}$ and $J(v^{i})<J(u^{n})=\inf_{W_{n}}J$ for all $i\in \N$. From this inequality it follows that $v^{i}\notin W_{n}$ for any $i\in\N$. Since $v^{i}\to u^{n}$ in $E$, due to \eqref{alfa-osz}, $v^{i}\to u^{n}$ in $l_{\infty}$ as well. Choose a positive $\delta$ such that $\delta <\frac{1}{2}(d_{n}-c_{n})$. Then, there exists $i_{\delta }\in \N$ such that $\norm{v^{i}-u^{n}}_{\infty }<\delta$ for every $i\geq i_{\delta}$. By using previous Step and taking into account the choice of the number $\delta$, we conclude that $\abs{v^{i}_k}\le d_{n}$ for all $k\in \Z$ and $i\geq i_{\delta}$, which contradicts to the fact $v^{i}\notin W_{n}$. 
	
	\textbf{Step 4.}	Now we show that $\lim_{n\to+\infty }\eta _{n}=-\infty$, where $\eta _{n}=\inf_{W_{n}}J$, $n\in\N$.
	
	Firstly, we assume that $(F_5^\infty)$ holds. Without lost of generality we can assume that
	$$
	\limsup\limits_{(k,t)\rightarrow (+\infty,+\infty)}\frac{F_k(t)}{\left( a_{k-1}+a_k+b_k\right) t^{p^+}}>\tfrac{1}{p^-}.
 $$
	There exists $\e>0$ such that
	$$
	\limsup\limits_{(k,t)\rightarrow (+\infty,+\infty)}\frac{F_k(t)}{\left( a_{k-1}+a_k+b_k\right) t^{p^+}}>\tfrac{1}{p^-}+\e.
 $$
	Hence we get the existence of sequences $\{k_n\}_{n\in\N}$, $\{t_n\}_{n\in\N}$ such that $k_n\to+\infty$, $t_n\to\infty$, $t_n\ge1$ and
	\begin{equation*}
	F_{k_n}(t_{n})>(\tfrac{1}{p^-}+\e)(a_{k_{n}-1}+a_{k_{n}}+b_{k_{n}})t_{n}^{p^+},
	\end{equation*}
	for all $n\in\N$. Without lost of generality, we may assume that $d_{n}\geq t_{n}\geq 1$ for all $n\in \N$. We define a sequence $\{w^{n}\}$ such that, for every $n\in\N$, $w^{n}_{k}=t_{n}$, if $k=k_n$ and $w^{n}_k=0$ if $k\in\Z\backslash \{k_{n}\}$. It is clear that $w^{n}\in W_{n}.$ Hence
	\begin{align*}
	&J(w^{n})=\tfrac{a_{k_n}}{p_{k_n}}t_n^{p_{k_n}}+\tfrac{a_{k_n-1}}{p_{k_n-1}}t_n^{p_{k_n-1}}+\tfrac{b_{k_n}}{p_{k_n}}t_n^{p_{k_n}}-F^+_{k_n}(t_n) \\
		&\le\tfrac{1}{p^-}(a_{k_n}+a_{k_n-1}+b_{k_n})t_{n}^{p^+}-F_{k_n}(t_n)<-\e(a_{k_{n}-1}+a_{k_{n}}+b_{k_{n}})t_{n}^{p^+}
			\end{align*}
	which gives $\lim_{n\to +\infty }J(w^{n})=-\infty $. 
	Now, assume that $(F_6^\infty)$ holds, i.e.
	$$
	\sup_{k\in\Z}\left(\limsup\limits_{t\to +\infty}\frac{F_k(t)}{\left( a_{k-1}+a_k+b_k\right) t^{p^+}}\right)>\tfrac{1}{p^-}.
 $$
	There exist $\e>0$ and $k_{0}\in\Z$ such that%
	\begin{equation*}
	\limsup\limits_{t\to +\infty }\frac{F_{k_0}(t)}{(a_{k_{0}-1}+a_{k_{0}}+b_{k_{0}})t^{p^+}}>\tfrac{1}{p^-}+\e.
	\end{equation*}%
	Then, there exists a sequence of real numbers $\{t_{n}\}$ such that $\lim_{n\to +\infty }t_{n}=+\infty$ and 
	\begin{equation*}
	F_{k_0}(t_{n})>\left(\tfrac{1}{p^-}+\e\right)(a_{k_{0}-1}+a_{k_{0}}+b_{k_{0}})t_{n}^{p^+}
	\end{equation*}
	for all $n\in \N$. Without lost of generality, we may assume that $d_{n}\geq t_{n}\geq
	1 $ for all $n\in \N$. Thus, take in $E$ a sequence $\{w^{n}\}$ such that, for every $n\in \N$, $w^{n}_{k_{0}}=t_{n}$ and $w^{n}_k=0$ for every $k\in \Z\backslash \{k_{0}\}$. Then, $w^{n}\in W_{n}$ and
	\begin{align*}
	&J(w^{n}) =\tfrac{a_{k_0}}{p_{k_0}}t_n^{p_{k_0}}+\tfrac{a_{k_0-1}}{p_{k_0-1}}t_n^{p_{k_0-1}}+\tfrac{b_{k_0}}{p_{k_0}}t_n^{p_{k_0}}-F^+_{k_0}(w^{n}_{k_0}) \\
		&\le\tfrac{1}{p^-}(a_{k_0}+a_{k_0-1}+b_{k_0})t_{n}^{p^+}-F_{k_0}(t_n)<-\e(a_{k_{0}+1}+a^n_{k_{0}}+b_{k_{0}})t_{n}^{p^+}
		\end{align*}%
	which gives $\lim_{n\rightarrow +\infty }J(w^{n})=-\infty $.
	
	Now, we are ready to end the proof of Theorem \ref{tw1}. From Proposition \ref{pom} and the above Steps 
	we have infinitely many pairwise distinct positive homoclinic solutions $u^{n}$ to \eqref{eq} with $u^{n}\in W_{n}$. To finish the proof, we will prove that $\norm{ u^{n}}_{E}\rightarrow +\infty $ as $n\rightarrow +\infty$. On the contrary suppose, that there exists a subsequence $\{u^{n_{i}}\}$ of $\{u^{n}\}$ which is bounded in $E$. From reflexivity of $E$ there exists $u^0\in E$ such that $u^{n_i}\rightharpoonup u^0$, as $i\to\infty$, by choosing a subsequence, if necessary. From sequentially weakly lower semicontinuity of $J$ on $E$ we get
	$$
	-\infty=\liminf_{i\to\infty}J(u^{n_{i}})\ge J(u^0),
	$$
	a contradiction. 
\end{proof}
\begin{proof}[Proof of Theorem \ref{tw2}] 
In analogously way to the proof of Theorem \ref{tw1} we can prove the existence of a sequence $\{u^n\}$ of positive solutions to \eqref{eq} such that $u^n=\argmin_{W_n}J$, where $W_n=\{u\in E:\norm{u}_{\infty}\le d_n\}$ and $d_n$ satisfies $(F^0_3)$. Moreover, $0\le u^n_k\le c_n$ for any $n\in\N$, $k\in\Z$. As in Step 4 from Theorem \ref{tw1} we get that $J(u^n)<0$, $n\in\N$. Moreover, we get that 
$$
0>J(u^n)\ge-c_n\sum_{k\in\Z}\max_{\abs{t}\le d_1}\abs{f_k(t)}.
$$
Above and $(F_2)$, $(F_3^0)$ imply that $\lim_{n\to\infty}J(u^n)=0$. To finish the proof, notice that 
$$
0\le\tfrac{1}{p^+}\rho_{E}(u^n)\le\Phi(u^n)=J(u^n)+\sum_{k\in\Z}F^+_k(u^n_k)<c_n\sum_{k\in\Z}\max_{\abs{t}\le d_1}\abs{f_k(t)}
$$
for $n\in\N$. Hence $\norm{u}_E\to0$, as $n\to\infty$.
\end{proof}

\subsection{Proofs of Theorems \ref{tw2'}, \ref{tw1'}}
We start this section with presentation of main tool in our proofs.
\begin{theorem}\label{cpt}\cite{BMB}, \cite{R}
Let $(E,\left\Vert \cdot \right\Vert )$ be a reflexive real Banach space, let $\Phi ,\Psi :E\to\R$ be two continuously differentiable functionals with $\Phi $ coercive, i.e. $\lim_{\norm{u}\to \infty }\Phi (u)=+\infty ,$ and a sequentially weakly lower semicontinuous functional and $\Psi $ a sequentially weakly upper semicontinuous functional. For every $r>\inf_{E}\
\Phi $, let us put
\begin{equation*}
\varphi (r):=\inf_{u\in \Phi ^{-1}((-\infty ,r))}\frac{\left( \sup_{v\in
\Phi ^{-1}((-\infty ,r))}\Psi (v)\right) -\Psi (u)}{r-\Phi (u)}
\end{equation*}%
and%
\begin{equation*}
\delta :=\liminf_{r\rightarrow (\inf_{E}\Phi )^{+}}\varphi (r),\quad \gamma :=\liminf_{r\to+\infty}\varphi(r).
\end{equation*}%
Let $J_{\lambda }:=\Phi (u)-\lambda \Psi (u)$ for all $u\in E$. Then
\begin{itemize}
\item[i)] If $\delta<+\infty $ then, for each $\lambda \in \left( 0,\frac{1}{\delta }\right)$,
the following alternative holds: either
\begin{itemize}
\item[$i_a)$] there is a global minimum of $\Phi $ which is a local minimum of $%
J_{\lambda }$, or 
\item[$i_b)$]there is a sequence $\{u^{n}\}$ of pairwise distinct critical points of $J_{\lambda }$, with $\lim_{n\rightarrow +\infty }\Phi
(u^{n})=\inf_{E}\Phi $, which weakly converges to a global minimum of $\Phi $.
\end{itemize}
\item[ii)] If $\gamma<+\infty $ then, for each $\lambda \in \left( 0,\frac{1}{\gamma}\right)$, the following alternative holds: either
\begin{itemize}
\item[$ii_a)$] $J_{\lambda }$ has a global minimum, or
\item[$ii_b)$]  there is a sequence $\{u^{n}\}$ of critical points of $J_{\lambda }$, with $\lim_{n\rightarrow +\infty }\Phi
(u^{n})=+\infty$.
\end{itemize}
\end{itemize}
\end{theorem}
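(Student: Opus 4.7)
The plan is to prove both alternatives by restricting $J_\lambda$ to closed $\Phi$-sublevel sets, minimizing there, and using the definition of $\varphi(r)$ to force the minimizer to lie strictly inside. An interior minimum of $J_\lambda$ over a norm-open $\Phi$-sublevel set is automatically a local minimum of $J_\lambda$ on $E$ and hence a critical point; chaining this construction across a suitable sequence of levels $r$ will yield the desired sequence of critical points.

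First I would establish existence of a minimizer of $J_\lambda$ on each sublevel set. For every $r>\inf_E\Phi$ the set $A_r:=\Phi^{-1}((-\infty,r])$ is bounded (by coercivity of $\Phi$) and sequentially weakly closed (by sequential weak lower semicontinuity of $\Phi$), and is therefore sequentially weakly compact in the reflexive space $E$. Because $\Psi$ is sequentially weakly upper semicontinuous and $\lambda>0$, the functional $J_\lambda=\Phi-\lambda\Psi$ is sequentially weakly lower semicontinuous and so attains a minimum $u_r\in A_r$.

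The decisive step is to prove that $u_r$ lies strictly inside $A_r$ as soon as $\lambda\varphi(r)<1$. Given such $r$, the definition of $\varphi(r)$ furnishes $\tilde u$ with $\Phi(\tilde u)<r$ and
$$
\lambda\Bigl(\sup_{\Phi(v)<r}\Psi(v)-\Psi(\tilde u)\Bigr)<r-\Phi(\tilde u),
$$
which rearranges to $J_\lambda(\tilde u)<r-\lambda\sup_{\Phi<r}\Psi$. On the boundary, any $w\in A_r$ with $\Phi(w)=r$ satisfies $J_\lambda(w)\ge r-\lambda\sup_{\Phi\le r}\Psi$, and a short approximation argument (approximating $w$ weakly by elements of $\{\Phi<r\}$ and using weak upper semicontinuity of $\Psi$) shows $\sup_{\Phi\le r}\Psi=\sup_{\Phi<r}\Psi$. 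Hence $J_\lambda(u_r)\le J_\lambda(\tilde u)<\inf_{\{\Phi=r\}}J_\lambda$, so $\Phi(u_r)<r$. Since $\Phi$ is $C^1$, $\{\Phi<r\}$ is norm-open, making $u_r$ a local minimum of $J_\lambda$ in $E$, and thus $J_\lambda'(u_r)=0$.

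To conclude (i) I would select $r_n\downarrow\inf_E\Phi$ with $\varphi(r_n)\to\delta<1/\lambda$, producing critical points $u^n$ with $\Phi(u^n)<r_n$, whence $\Phi(u^n)\to\inf_E\Phi$. A dichotomy then applies: if only finitely many $u^n$ are pairwise distinct, some $u^\ast$ repeats infinitely often, is simultaneously a critical point of $J_\lambda$ attaining $\inf_E\Phi$ and, by construction, a local minimum of $J_\lambda$, giving $i_a)$; otherwise, extracting a weakly convergent subsequence and invoking sequential weak lower semicontinuity of $\Phi$ shows the weak limit is a global minimizer of $\Phi$, giving $i_b)$. Case (ii) follows the same template with $r_n\to+\infty$: either some subsequence of $\Phi(u^n)$ stays bounded, so that coercivity and reflexivity produce a weak limit realizing a global minimum of $J_\lambda$ (alternative $ii_a)$), or $\Phi(u^n)\to+\infty$ (alternative $ii_b)$). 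The main difficulty I anticipate is the clean strict-separation estimate in the middle step: one must reconcile the open-set supremum $\sup_{\Phi<r}\Psi$ appearing in the definition of $\varphi$ with the behavior of $J_\lambda$ on the closed boundary $\{\Phi=r\}$, which requires both the weak upper semicontinuity of $\Psi$ and the weak compactness of $A_r$ so that the sup over the open sublevel set genuinely controls the sup over its closure, thereby delivering a bona fide critical point for every admissible $r$.
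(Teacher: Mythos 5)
First, a point of context: the paper does not prove this statement at all --- it is imported verbatim as the abstract tool from Bonanno--Molica Bisci \cite{BMB} and Ricceri \cite{R} --- so your attempt can only be measured against the known proofs of the variational principle. Your architecture (minimize $J_\lambda$ on sublevel sets of $\Phi$, show the minimizer is interior, hence a local minimum and a critical point, then run over a sequence of levels $r_n$) is the right one, and Step 1 and the final dichotomies are essentially sound in outline. The genuine gap is exactly the step you flag as ``the main difficulty''. You minimize over the \emph{closed} sublevel set $A_r=\{\Phi\le r\}$ and claim $\sup_{\Phi\le r}\Psi=\sup_{\Phi<r}\Psi$ ``by approximating $w$ weakly by elements of $\{\Phi<r\}$ and using weak upper semicontinuity of $\Psi$''. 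This fails for two separate reasons. (a) A point $w$ with $\Phi(w)=r$ need not be approximable, weakly or in norm, from $\{\Phi<r\}$: if $w$ is a local minimizer of $\Phi$ with value exactly $r$, then $\Phi\ge r$ on a whole neighbourhood of $w$ and $w$ is isolated from the open sublevel set. (b) Even when $v_n\rightharpoonup w$ with $\Phi(v_n)<r$, sequential weak upper semicontinuity gives $\Psi(w)\ge\limsup_n\Psi(v_n)$, an inequality in the \emph{wrong} direction: it allows $\Psi$ to jump up at the weak limit and cannot bound $\Psi(w)$ above by $\sup_{\Phi<r}\Psi$. Concretely, already on $E=\R$ one can take $\Phi$ coercive with an extra strict local minimum at level $r$ far from $\{\Phi<r\}$ and $\Psi$ very large there; then the minimizer of $J_\lambda$ over $A_r$ sits at that isolated boundary point, is not a local minimum of $J_\lambda$, and need not be a critical point, so your construction of the sequence collapses for such $r$.

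The repair, which is how the cited proofs are actually organized, is to work with the open set $M_r=\Phi^{-1}((-\infty,r))$ itself: take a minimizing sequence for $J_\lambda$ in $M_r$, extract a weak limit $\bar u$ in the sequentially weakly compact set $\{\Phi\le r\}$, and exclude $\Phi(\bar u)=r$ by comparing $J_\lambda(\bar u)=r-\lambda\Psi(\bar u)$ with the competitor estimate $J_\lambda(u_0)<r-\lambda\sup\Psi$, where in Ricceri's formulation the supremum in the numerator of $\varphi$ is taken over the \emph{weak closure} of $M_r$, so that $\bar u$ itself lies in the set over which the supremum is taken and the contradiction is immediate. The version with $\sup_{M_r}\Psi$ quoted here is then recovered in the paper's application because there $\Psi$ is in fact sequentially weakly \emph{continuous}, which identifies the two suprema; with mere upper semicontinuity your identification is unjustified. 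Finally, in case (ii) you should also record that $J_\lambda(u^n)=\inf_{M_{r_n}}J_\lambda\to\inf_E J_\lambda$ (since $\bigcup_n M_{r_n}=E$ by coercivity) before concluding that a bounded subsequence yields a global minimizer of $J_\lambda$; without this the weak limit is only known to be a critical point, not a global minimum.
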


\begin{proof}[Proof of Theorem \ref{tw2'}]  
Let $\Phi$ be as in \eqref{Phi} and $\Psi$ be as in \eqref{Psi}. By Lemma \ref{phi-C1}, $\Phi $ is a continuously differentiable, sequentially weakly lower semicontinuous functional on $E$ and by Proposition \ref{osz-Phi}  it is easy to see that $\Phi$ is coercive. $\Psi $ is a continuously differentiable and sequentially weakly upper semicontinuous
functional on $E$. Let $J$ be as in \eqref{defJ}. To apply Theorem \ref{cpt} to the function $J=J_1$, we show that $\delta<1$.  By the definition of $\varphi$ we get that $\delta\ge0$. Let
$$
A:=\liminf\limits_{t\to0^+}\frac{\sum_{k\in\Z}\max_{\abs{\xi}\le t}F_k(\xi)}{t^{p^+}}. 
$$
By the definition of $F_k$ and assumption $(F_4^0)$ we get that $A\in[ 0, \tfrac{1}{p^+\alpha^{p^+}})$, where $\alpha$ is given by \eqref{alfa}. Let $\{c_{m}\}\subset(0,\alpha)$ be a sequence such that $\lim_{m\rightarrow \infty }c_{m}=0$ and%
\begin{equation}\label{A}
\lim_{m\rightarrow +\infty }\frac{\sum_{k\in\Z}\max_{\left\vert \xi \right\vert \leq c_{m}}F_k(\xi )}{c_{m}^{p^+}}=A.
\end{equation}%
Set 
\begin{equation*}
r_{m}:=\frac{1}{p^+}\left(\frac{c_m}{\alpha}\right)^{p^+}
\end{equation*}%
for every $m\in \N$. Then, $r_m\in(0,1)$ and  $r_m\to0$, as $m\to\infty$. By Proposition \ref{osz-Phi}, if $v\in E$ and $\Phi (v)<r_{m}$ for $m\in\N$, then we have $\norm{v}_{E}\le 1$ and hence 
\begin{equation*}
\norm{v}_{\infty }\leq \alpha (p^+\Phi(v))^{\tfrac{1}{p^+}} \leq \alpha (p^+r_m)^{\tfrac{1}{p^+}}=c_{m}
\end{equation*}%
and 
\begin{equation}
\Phi ^{-1}\left( \left( -\infty ,r_{m}\right) \right) \subset \left\{ v\in
E:\norm{v}_{\infty }\leq c_{m}\right\} .
\end{equation}%
From above and $\Phi (0)=\Psi (0)=0$ we have%
$$
\varphi (r_{m})\leq \frac{\sup_{\Phi (v)<r_{m}}\sum_{k\in \Z}
F^+_k(v_k)}{r_{m}}\leq \frac{\sum_{k\in\Z}\max_{\abs{t}\leq c_{m}}F_k(t)}{r_{m}} =p^+\alpha^{p^+}\cdot \frac{\sum_{k\in \Z}\max_{\abs{t}\leq c_{m}}F_k(t)}{c_{m}^{p^+}}
$$
for $m\in\N$. Hence 
\begin{equation*}
\delta \leq \lim_{m\to +\infty }~\varphi (r_{m})\leq p^+\alpha^{p^+}A<1.
\end{equation*}

Now, we show that the point $(i_a)$ in Theorem \ref{cpt} does not hold, i.e. that the unique global minimum 0 of $\Phi $\ is not a local minimum of $J$. 

Firstly, we assume that $(F_5^0)$ is satisfied. Without lost of generality, there exist sequence $\{k_n\}\subset\N$ and sequence $\{t_n\}\subset(0,1)$ such that $k_n\to\infty$, $t_n\to0^+$ and
\begin{equation}\label{osz-2}
F_{k_n}(t_n)>\tfrac{1}{p^-}\left(a_{k_n-1}+a_{k_n}+b_{k_n}\right)t_n^{p^-},
\end{equation}
for any $n\in\N$. Now we define a sequence $\{s^n\}$, where $s^n_k=t_n$ for $k=k_n$, and $s^n_k=0$ otherwise. 
It is easy to see that for any $n\in\N$, $s^n\in E$,
$$
J(s^n)=\tfrac{a_{k_n}}{p_{k_n}}t_n^{p_{k_n}}+\tfrac{a_{k_n-1}}{p_{k_n-1}}t_n^{p_{k_n}-1}+\tfrac{b_{k_n}}{p_{k_n}}t_n^{p_{k_n}}-F^+_{k_n}(t_n)\le \tfrac{1}{p^-}\left(a_{k_n-1}+a_{k_n}+b_{k_n}\right)t_n^{p^-}-F_{k_n}(t_n)<0,
$$
and $\norm{s^n}_{\infty}\to0$. We prove that $\norm{s^n}_{E}\to0$. Notice that form \eqref{osz-2} we have
$$
\norm{s^n}_{E}=\tfrac{a_{k_n}}{p_{k_n}}t_n^{p_{k_n}}+\tfrac{a_{k_n-1}}{p_{k_n-1}}t_n^{p_{k_n}-1}+\tfrac{b_{k_n}}{p_{k_n}}t_n^{p_{k_n}}\le \tfrac{1}{p^-}\left(a_{k_n-1}+a_{k_n}+b_{k_n}\right)t_n^{p^-}<F_{k_n}(t_n).
$$
Without lost of generality, we may assume that $c_{m}\geq t_{m}$ for all $m\in \N$. Since $\lim_{m\rightarrow +\infty }c_{m}=0$ and the limit in \eqref{A} is finite, we get
\begin{equation*}
\lim_{m\to +\infty}\sum_{k\in\Z}\max_{\abs{\xi}\leq c_{m}}F_k(\xi )=0.
\end{equation*}
Therefore, we get $\lim_{m\rightarrow +\infty }\left( \max_{\abs{ \xi}
\leq c_{m}}F_{k}(t)\right) =0$ uniformly for all $k\in \Z$. Hence $\lim_{n\rightarrow +\infty
}F_{k_{n}}(t_{n})=0$ and so $\lim_{n\rightarrow +\infty }\norm{s^{n}}_E =0.$

Now, we assume that $(F_6^0)$ holds. There exist $k_0\in\Z$ and sequence $\{t_n\}\subset(0,1)$ such that $t_n\to0^+$ and
$$
F_{k_0}(t_n)>\tfrac{1}{p^-}\left(a_{k_0-1}+a_{k_0}+b_{k_0}\right)t_n^{p^-},
$$
for any $n\in\N$. Moreover, we define a sequence $\{s^n\}$, where $s^n_k=t_n$ for $k=k_0$, and $s^n_k=0$ otherwise.  
As in the previous part, we can show that $J(s^n)<0$, $n\in\N$ and $\norm{s^n}_E\to0$ , as $n\to\infty$. 

From above it follows that 0 is not a local minimum of $J$ and, by $(i_b)$ from Theorem \ref{cpt}, there is a sequence $\{u^{n}\}$ of pairwise
distinct critical points of $J$ with $\lim_{n\rightarrow +\infty}\Phi (u^{n})=\inf_{E}\Phi $. The last means that $0=\inf_{E}\Phi =$ $\lim_{n\to +\infty }\Phi (u^{n})\ge\lim_{n\to +\infty } \frac{1}{p^+}\rho_E(u^n)$, and by Proposition \ref{osz-Phi}, $\{u^{n}\}$\ strongly converges to zero. 
\end{proof}

\begin{proof}[Proof of Theorem \ref{tw1'}] 

In analogously way to the proof of Theorem \ref{tw2'} we can prove that $\gamma<1$ and we exclude condition $(ii_a)$ in Theorem \ref{cpt}. It  means that $J$ does not have a global minimum and, by $(ii_b)$ from Theorem \ref{cpt}, there is a sequence $\{u^{n}\}$ of critical points of $J$ with $\lim_{n\rightarrow +\infty}\Phi (u^{n})=\infty$. 
By Propositions \ref{osz-Orl} and \ref{osz-Phi} we have $\lim_{n\to\infty}\norm{u^{n}}_E=\infty$. The proof is complete.
\end{proof}

\subsection{Examples}
\begin{example}
Consider a problem 
\begin{equation}
\left\{ 
\begin{array}{ll}
-\nabla^- \left( \abs{\nabla^+ u_k}^{p_k-2}\nabla^+ u_k\right) +\abs{k}\abs{u_k}^{p_k-2}u_k=f_k(u_k)& \mbox{for
all $k\in\mathbb{Z}$} \\ 
u(k)\rightarrow 0 & \mbox{as $|k|\to \infty$},%
\end{array}%
\right.  \label{ex}
\end{equation}%
where $\{p_k\}_{k\in\Z}$ is any bounded sequence such that $1<p^-=\inf_{k\in\Z}p_k\le \sup_{k\in\Z}p_k=p^+$. Fix $q\ge2$ such that $\tfrac{p^++1}{p^-}<q<p^++1$. For $k\in\Z$, $f_k:\R\to\R$ is defined by 
\begin{equation}
f_k(s)=e_{k}\left( d_{k}-c_{k}-2\left\vert s-\tfrac{1}{2}\left( c_{k}+d_{k}\right)
\right\vert \right) \cdot \mathbf{1}_{ [c_{k},d_{k}]}(s)
\label{fun}
\end{equation}%
with sequences $\{c_{m}\},\{d_{m}\},\{e_{m}\},\{h_{m}\}$ defined by 
\begin{equation}\label{wsp}
c_{m}=\frac{1}{2^{q^{2m}}}, \quad d_{m}=\frac{1}{2^{q^{2m-1}}}, \quad h_{m}=\frac{1}{2^{(p^++1)q^{2m-2}}}, \quad e_{m}=\frac{2h_{m}}{(d_{m}-c_{m})^{2}},\quad m\in\N.
\end{equation}
Here $\mathbf{1}_{A }$ is the indicator of $A$. It is easily seen that $(A)$ is satisfied, $f_k$ are continuous for $k\in\Z$ and $(F_{1})$ is satisfied. Notice that $\{d_m\}$ is a decreasing sequence of positive numbers which is convergent to 0 and $f_k(t)=0$ for $t\notin[c_k,d_k]$, hence it is enough to check $(F_2)$ for $T=d_1$. For $T=d_1$ we get that
$$
\sum_{k\in\Z}\max_{\abs{t}\le d_1}\abs{f_k(t)}=
\sum_{k\in\N}\tfrac{2h_k}{d_k-c_k}=\sum_{k\in\N}\frac{2^{1-q^{2m-1}\left((p^++1)q^{-1}-1\right)}}{1-2^{q^{2m}\left(q^{-1}-1\right)}}\le C\sum_{k\in\N}2^{1-q^{2m-1}\left((p^++1)q^{-1}-1\right)}<\infty,
$$
for $C>0$ such that $1-2^{q^{2m}\left(q^{-1}-1\right)}>C^{-1}$ for any $m\in\N$. Notice that $F_k(d_{k})=\int_{0}^{d_{k}}f_k(t)dt=\int_{c_{k}}^{d_{k}}f_k(t)dt=h_{k}$ and
\begin{align}
&\liminf_{t\rightarrow 0^{+}}\frac{\sum_{k\in\Z }\max_{\abs{\xi} \leq t}F_k(\xi)}{t^{p^+}} \leq
\lim_{m\rightarrow +\infty }\frac{\sum_{k\in \Z}\max_{\abs{\xi} \leq c_{m}}F_k(\xi )}{c_{m}^{p^+}}=\lim_{m\rightarrow +\infty }\frac{\sum_{k=m+1}^{\infty }h_k}{c_{m}^{p^+}}  \label{A}
\\
&
\le\lim_{m\rightarrow +\infty }\frac{2h_{m+1}}{c_{m}^{p^+}}= \lim_{m\rightarrow +\infty }2^{-q^{2m}+1}=0  \notag
\end{align}
and 
\begin{equation}
\limsup\limits_{(k,t)\rightarrow (+\infty ,0^{+})}\frac{F_k(t)}{(2+k)t^{p^-}}\ge\lim_{m\rightarrow +\infty }\frac{F_m(d_{m})}{(2+m)d_{m}^{p^-}}
=\lim_{m\to\infty}\frac{2^{q^{2m-1}(p^--q^{-1}(p^++1))}}{2+m}=+\infty.\label{BB} 
\end{equation}
It means that conditions $(F_{4}^0),(F_{5}^{0})$ are satisfied and by Theorem \ref{tw2'} problem \eqref{ex} with \eqref{fun} and \eqref{wsp} admits a sequence of positive solutions in $E$ whose norms tend to zero. Notice that $(F^0_6)$ is not satisfied in this case.
\end{example}
\begin{remark}\label{rem-1}
 Fix $k_{0}\in \Z$. If we define $f_{k}\equiv0$ for $k\in\Z\setminus\{k_0\}$ and 
$$
f_{k_0}(s)=\sum_{m\in\N}e_m\left( d_{m}-c_{m}-2\left\vert s-\tfrac{1}{2}\left( c_{m}+d_{m}\right)
\right\vert \right) \cdot \mathbf{1}_{[c_{m},d_{m}]}(s)
$$
with sequences $\{c_{m}\},\{d_{m}\},\{e_{m}\},\{h_{m}\}$ defined in \eqref{wsp}, then $(F_{1})$, $(F_{2})$, $(F_{4}^0)$ and $(F_{6}^0)$ are satisfied, but $(F_{5}^{0})$ is not satisfy.
\end{remark}
\begin{remark}\label{rem-2} 
Let us note that Theorem \ref{tw2} and Theorem \ref{tw2'} are independent. Let $p^+>2$. Fix $q\ge2$ such that $\tfrac{p^+}{p^-}<q<p^+$. Let us replace $h_{m}$ in \eqref{wsp} by 
$$
h_{m}=\frac{2}{\alpha^{p^+}p^+2^{p^+q^{2m-4}}},\  \text{for }\ m\in \N.
$$
Then functions $f_k$ given by \eqref{fun} are continuous for $k\in\Z$ and $(F_1)$, $(F_2)$ are satisfied. 
An easy computation shows that 
$$
\liminf_{t\rightarrow 0^{+}}\frac{\sum_{k\in\Z}\max_{\left\vert \xi \right\vert \leq t}F_k(\xi )}{t^{p}}\geq \lim_{m\rightarrow +\infty }\frac{\sum_{k\in \Z}\max_{\abs{\xi} \leq c_{m+1}}F_k(\xi )}{c_{m}^{p^+}}\ge\tfrac{2}{\alpha^{p^+}p^+}
$$
and%
\begin{equation*} 
\limsup\limits_{(k,t)\rightarrow (+\infty ,0^{+})}\frac{F_k(t)}{(2+k)t^{p^-}}=+\infty.
\end{equation*}%
This means that we can not apply Theorem \ref{tw2'}, but Theorem \ref{tw2} works. On the other hand, it is easy to see that we can modify $f_k$ in the way, that for some (or even infinitely many) $k$ we have $f_k(t)>0$ for all $t>0$ and the limits \eqref{A}, \eqref{BB} do not change. Therefore, such  $f_k$ do not satisfy $(F_{3}^0)$ and can not be used in Theorem \ref{tw2}.
\end{remark}
\begin{example}
Consider  problem \eqref{ex} with $\{p_k\}_{k\in\Z}$ where $\{p_k\}_{k\in\Z}$ is any bounded sequence such that $1<p^-=\inf_{k\in\Z}p_k\le \sup_{k\in\Z}p_k=p^+$. Fix $q\ge2$ such that $q>\tfrac{p^+}{p^--1}$. For $k\in\Z$ $f_k:\R\to\R$ is defined by 
\begin{equation}
f_k(s)= e_k\left( d_{k}-c_{k}-2\abs{s-\tfrac{1}{2}\left( c_{k}+d_{k}\right)}\right) \cdot \mathbf{1}_{[c_{k},d_{k}]}(s)
\label{fun1}
\end{equation}%
with sequences $\{c_{m}\},\{d_{m}\},\{e_{m}\},\{h_{m}\}$ defined by 
\begin{equation}\label{wsp1}
c_{m}=2^{q^{2m}}, \quad d_{m}=2^{q^{2m+1}}, \quad h_{m}=2^{(p^--1)q^{2m+2}}, \quad e_{m}=\tfrac{2h_{m}}{(d_{m}-c_{m})^{2}},\quad m\in\N.
\end{equation} 
It is easily seen that $f_k$ are continuous for $k\in\Z$ and $(F_{1})$ is satisfied. For any $T>0$ the set $\{k\in\Z:\max_{\abs{t}\le T}\abs{f_k(t)}>0\}$ is finite, so $(F_2)$ is satisfied. Notice that $\int_{c_{k}}^{d_{k}}f_k(t)dt=h_{k}$ and
\begin{align*}
&\liminf_{t\rightarrow +\infty}\frac{\sum_{k\in\Z }\max_{\abs{\xi} \leq t}F_k(\xi)}{t^{p^-}} \leq
\lim_{m\rightarrow +\infty }\frac{\sum_{k\in\Z }\max_{\abs{\xi} \leq c_{m}}F_k(\xi )}{c_{m}^{p^-}}=\lim_{m\rightarrow +\infty }\frac{\sum^{m-1}_{k=1}h_k}{c_{m}^{p^-}}  
\\
&\le\lim_{m\rightarrow +\infty }\frac{(m-1)h_{m-1}}{c_{m}^{p^-}}= \lim_{m\rightarrow +\infty }\frac{m-1}{2^{q^{2m}}}=0  
\end{align*}
and 
$$
\limsup\limits_{(k,t)\rightarrow (+\infty ,+\infty)}\frac{F_k(t)}{(2+k)t^{p^-}}\ge\lim_{m\rightarrow +\infty }\frac{F_m(d_{m})}{(2+m)d_{m}^{p^-}}=\lim_{m\to\infty}\frac{2^{q^{2m+1}((p^--1)q-p^+)}}{2+m}=+\infty.
$$
It means that conditions $(F_{4}^\infty),(F_{5}^{\infty})$ are satisfied and by Theorem \ref{tw1'} problem \eqref{ex} with \eqref{fun1} and \eqref{wsp1} admits a sequence of positive solutions in $E$ whose norms tend to infinity. Notice that $(F^\infty_6)$ is not satisfied in this case.
\end{example}
\begin{remark}
In analogously way to Remarks \ref{rem-1} and \ref{rem-2} we can construct an example which show that Theorems \ref{tw1}, \ref{tw1'} are independent and an example of a function which satisfies $(F_1)$, $(F_2)$, $(F_4^\infty)$, $(F_6^\infty)$ and does not satisfy $(F_5^\infty)$.
\end{remark}

\end{document}